\documentclass[12pt]{article}
\usepackage{fullpage,amsthm,amssymb,amsmath}
\usepackage{graphicx,algorithmic,algorithm,amsfonts, verbatim}
\usepackage{enumerate,kotex}
\usepackage{color}
\usepackage{authblk}
\usepackage[pagewise]{lineno}
%\linenumbers
%\usepackage{tikz}

\newtheorem*{thmk4m}{Theorem~\ref{thm:K4minor}}
\newtheorem*{thmc3}{Theorem~\ref{thm-c3}}
\newtheorem{theorem}{Theorem}[section]
\newtheorem{lemma}[theorem]{Lemma}
\newtheorem{proposition}[theorem]{Proposition}
\newtheorem{question}[theorem]{Question}

\newtheorem{strategy}[theorem]{Strategy}

\newcommand\abs[1]{\lvert #1\rvert}

%%\author{
%%Hojin Choi\thanks{
%%Department of Mathematical Sciences, KAIST, Daejeon, Republic of Korea.
%%\texttt{hojinchoi@kaist.ac.kr}
%%}
%%\and
%%Ilkyoo Choi\thanks{
%%Corresponding author.
%%Supported by the Basic Science Research Program through 
%%the National Research Foundation of Korea (NRF) funded by the Ministry of Education (NRF-2018R1D1A1B07043049), 
%%and also by Hankuk University of Foreign Studies Research Fund.
%%Department of Mathematics, Hankuk University of Foreign Studies, Yongin-si, Gyeonggi-do, Republic of Korea, \texttt{ilkyoo@hufs.ac.kr}.
%%}
%%\and
%%Jisu Jeong\thanks{
%%Department of Mathematical Sciences, KAIST, Daejeon, Republic of Korea.
%%\texttt{jisujeong89@gmail.com}}
%%\and
%%Sang-il Oum\thanks{
%%Discrete Mathematics Group, Center for Mathematical and Computational Sciences, Institute for Basic Science, Daejeon, Republic of Korea.
%%Department of Mathematical Sciences, KAIST, Daejeon, Republic of Korea.
%%Supported by the National Research Foundation of Korea (NRF) grant funded by the Korea government (MSIT) (No. NRF-2017R1A2B4005020).
%%\texttt{sangil@kaist.edu}
%%}
%%}

 \begin{document}
 \title{Online Ramsey theory for a triangle on $F$-free graphs}
 
\author[$\ast\mathparagraph$]{Hojin Choi}
\author[$\dagger\mathsection$]{Ilkyoo Choi}
\author[$\ast\mathparagraph$]{Jisu Jeong}
\author[$\ast\parallel \mathparagraph  $]{Sang-il Oum}
\affil[$\mathsection$]{Department of Mathematics, Hankuk University of Foreign Studies, Yongin-si, Gyeonggi-do, Republic of Korea}
\affil[$\parallel$]{Discrete Mathematics Group, Institute for Basic Science (IBS), Daejeon, Republic of Korea}
\affil[$\mathparagraph$]{Department of Mathematical Sciences, KAIST, Daejeon, Republic of Korea}
\footnotetext[1]{Supported by the National Research Foundation of Korea (NRF) grant funded by the Korea government (MSIT) (No. NRF-2017R1A2B4005020).}
\footnotetext[2]{Corresponding author. 
Supported by the Basic Science Research Program through the National Research Foundation of Korea (NRF) funded by the Ministry of Education (NRF-2018R1D1A1B07043049), and also by Hankuk University of Foreign Studies Research Fund.
}
\footnotetext[3]{Emails: \texttt{hojinchoi@kaist.ac.kr},
  \texttt{ilkyoo@hufs.ac.kr}, \texttt{jjisu@kaist.ac.kr},
  \texttt{sangil@kaist.edu}}

\date\today
\maketitle

\begin{abstract}
Given a class $\mathcal{C}$ of graphs and a fixed graph $H$, the \emph {online Ramsey game for $H$ on $\mathcal C$} is a game between two players Builder and Painter as follows: 
an unbounded set of vertices is given as an initial state, and on each turn Builder introduces a new edge with the constraint that the resulting graph must be in $\mathcal C$, and Painter colors the new edge either red or blue. 
Builder wins the game if Painter is forced to make a monochromatic copy of $H$ at some point in the game.
Otherwise, Painter can avoid creating a monochromatic copy of $H$ forever, and we say Painter wins the game.

We initiate the study of characterizing the graphs $F$ such that for a given graph $H$, Painter wins the online Ramsey game for $H$ on $F$-free graphs. 
We characterize all graphs $F$ such that Painter wins the online Ramsey game for $C_3$ on the class of $F$-free graphs, except when $F$ is one particular graph.
We also show that Painter wins the online Ramsey game for $C_3$ on the class of $K_4$-minor-free graphs, extending a result by Grytczuk, Ha\l uszczak, and Kierstead.

%The investigation of online (size) Ramsey theory on specific graph classes was initiated in 2004 by Grytczuk, Ha\l uszczak, and Kierstead.
%They studied online Ramsey theory on forests, $k$-colorable graphs, outerplanar graphs, and planar graphs. 
%Recently, Pet\v{r}\'i\v{c}kov\'a continued the study of online Ramsey theory on planar graphs further by disproving a conjecture by Grytczuk, Ha\l uszczak, and Kierstead.
%
%We carry on the study of online Ramsey theory by considering classes of graphs with one forbidden subgraph. 
%We focus on characterizing the classes where Painter wins the online Ramsey game for $C_3$ on the class of $F$-free graphs; we succeed the characterization except when $F$ is one particular graph.
%We also show that Painter wins the online Ramsey game for $C_3$ on the class of $K_4$-minor-free graphs, extending a result by Grytczuk, Ha\l uszczak, and Kierstead.
%
%We continue the study of online Ramsey theory.
%
%Section 2: We prove that Builder cannot force a monochromatic copy of $C_3$ on $K_4$-minor-free graphs in online Ramsey game, extending a previous result. 
%
%Section 3: We characterize the class of $H$-free graphs where Builder cannot force a monochromatic copy of $C_3$. 
%
%Section 4: We characterize the class of $H$-free graphs where Builder cannot force a monochromatic copy of $F$ for some other small graphs $F$. 
%We also make some conjectures and questions. 
\end{abstract}

\section{Introduction}

All graphs in this paper are finite.
For a host graph $G$ and a target graph $H$, let $G\rightarrow H$ mean that there exists a monochromatic copy of $H$ for every (not necessarily proper) $2$-edge-coloring of $G$.
For a graph parameter $\rho$, let $R_\rho(H)$ denote the minimum $\rho(G)$ where $G\rightarrow H$.
When $\rho$ counts the number of vertices in a graph, $R_\rho(H)$ is \emph {the Ramsey number} of $H$ and it is often denoted $R(H)$. 
The well-known Ramsey's Theorem~\cite{1930Ra} from 1930 states that $R(H)$ is finite for every graph $H$.

Burr, Erd\H os, and Lov\'asz~\cite{1976BuErLo} introduced \emph {the chromatic Ramsey number} and \emph {the degree Ramsey number}, which arises when $\rho$ is the chromatic number and the maximum degree, respectively. 
Erd\H os et al.~\cite{1978ErFaRoSc} introduced \emph {the size Ramsey number}, denoted $R_e(H)$, which arises when $e(G)$ is the number of edges in a graph $G$.
We redirect the readers to a thorough survey by Conlon, Fox, and Sudakov~\cite{2015CoFoSu} for more history regarding these parameters.

Another variant of Ramsey theory is online Ramsey theory, introduced by Beck~\cite{1993Be} in 1993. 
Given a class $\mathcal{C}$ of graphs and a fixed graph $H$, an \emph {online Ramsey game for $H$ on $\mathcal C$} is a game between two players Builder and Painter with the following rules: 
an unbounded set of vertices is given as an initial state, and on each turn Builder introduces a new edge with the constraint that the resulting graph must be in $\mathcal C$, and Painter colors the new edge either red or blue. 
%one vertex is given as an initial state, and on each turn Builder draws finitely many vertices and a new edge with the constraint that the resulting graph must be in $\mathcal C$, and Painter colors the new edge either red or blue. 
Builder wins if Painter is forced to make a monochromatic copy of $H$ at some point of the game, and we say Builder wins the online Ramsey game for $H$ on $\mathcal C$. 
Otherwise, Painter can avoid creating a monochromatic copy of $H$ forever, and we say Painter wins the online Ramsey game for $H$ on $\mathcal C$.
%we say $F$ is \emph {unavoidable} on $\mathcal C$.
%we say $F$ is \emph {avoidable} on $\mathcal C$.

If no graph in $\mathcal{C}$ contains $H$ as a subgraph, then Painter wins the online Ramsey game for $H$ on $\mathcal{C}$ since a copy of $H$ cannot be created, let alone a monochromatic one. 
Therefore it must be that $H$ is a subgraph of at least one graph in $\mathcal C$ for a result to be nontrivial.
If $\mathcal C$ is the class of graphs with bounded maximum degree, then this is the online version of the degree Ramsey number; see~\cite{2011BuGrKiMiStWe,2011Ro,2013Ro} for results regarding this topic. 

This paper concerns the online version of the size Ramsey number.
For a graph $H$, \emph {the online (size) Ramsey number} of $H$, denoted $r(H)$, is the minimum number of rounds required for Builder to win, assuming that both Builder and Painter play optimally. 
When there are no restrictions on the graphs Builder can create, it is an easy consequence of Ramsey's theorem~\cite{1930Ra} that Builder always wins the online Ramsey game for every target graph $H$, so $r(H)\leq R_e(H)$. 
For a fixed graph $H$, studying the ratio of $r(H)$ and $R_e(H)$ was initiated in~\cite{1993Be,2003FrKoRoRuTe,2005KuRu} and has drawn much attention since then~\cite{2004GrHaKi,2008GrKiPr,2009KiKo,2008Pr}.
There is also a line of research trying to determine some exact online Ramsey numbers~\cite{2009Co,2014CyDz,2015CyDzLaLo,2008GrKiPr,2008Pr,2012Pr}.
%,2008 gr ki pr: more for paths 
%2012 pr: onlin paths
%2009 co: online ramsey for complete+complete bipartite
%2014 cy to: online ramsey for c4
%2015 cy dz la lo: online ramsey for paths and cycles
%1978 ne ro 1978 erdos et al 1993 er ro : upper bound online complete bipartite
Additionally, there are some results on the behavior of $r(H)$ in various random settings~\cite{2010BaBu,2007MaMiSt,2009MaSpSt_1,2009MaSpSt,2009PrSpTh}.
%2007 ma mi st
%2009ma sp st1: random
%2009ma sp st: random
%2009 pr sp th: nice survey
%2010 ba bu : random

The investigation of online (size) Ramsey theory on specific graph classes was initiated in 2004 by Grytczuk, Ha\l uszczak, and Kierstead~\cite{2004GrHaKi}.
They studied online Ramsey theory on forests, $k$-colorable graphs, outerplanar graphs, and planar graphs. 
In particular, they conjectured that Builder wins the online Ramsey game for $H$ on planar graphs if and only if $H$ is an outerplanar graph. 
This conjecture was recently disproved by Pet\v{r}\'i\v{c}kov\'a~\cite{2014Pe}; she showed that one direction of the conjecture is true while the other direction is not. 

\begin{proposition}[\cite{2014Pe}]
For every outerplanar graph $H$, Builder wins the online Ramsey game for $H$ on planar graphs. 
%Builder can force a monochromatic copy of every outerplanar graph on planar graphs. 
\end{proposition}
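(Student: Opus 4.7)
My plan is to prove a strengthened statement by induction on the number of edges of $H$: for every outerplanar graph $H$ and every positive integer $k$, Builder has a strategy in the online Ramsey game on planar graphs to force $k$ vertex-disjoint monochromatic copies of $H$ in a common color. The original proposition is the case $k=1$. The base case $\abs{E(H)}\le 1$ is immediate, since Builder plays $2k$ disjoint edges and invokes pigeonhole on the two color classes.

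For the inductive step I would exploit the fact that every outerplanar graph with at least one edge has a vertex $v$ lying on the outer face with $\deg_H(v)\in\{1,2\}$ such that $H':=H-v$ is again outerplanar with strictly fewer edges. Applying the strengthened hypothesis to $H'$ with a suitably large parameter $N=N(k)$, Builder first forces $N$ pairwise vertex-disjoint monochromatic copies of $H'$, say in red. On each copy Builder then attaches a local gadget at the attachment site of $v$: a star with $M$ new leaves in the degree-$1$ case, and a collection of $M$ new vertices each joined to the two attachment points in the degree-$2$ case, where $M=M(k)$ is a second large parameter. Since each gadget touches only vertices on the outer face of its copy and the copies are vertex-disjoint, the gadgets can be placed in disjoint regions of the plane and planarity is preserved.

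The argument then splits into two cases. If within some gadget there is a fully red extension — a red pendant edge, or a red pair of edges from a new vertex to the two attachment points — that copy becomes a red copy of $H$; taking $N$ large enough in terms of $k$ ensures at least $k$ disjoint red copies of $H$, and we are done. Otherwise every gadget must contain a large blue substructure, and the strategy is re-routed to build $k$ disjoint blue copies of $H$ from this blue material. This second case is the main obstacle: the blue edges produced are combinatorially sparse (they form only stars or bipartite fragments), so turning them into genuine monochromatic blue copies of $H$ cannot be done in one step. Instead one nests the inductive argument, at each level feeding the blue skeleton back into the strategy for $H'$ in blue, and choosing the parameters $N(k)$ and $M(k)$ to grow fast enough (likely of tower type) to absorb the pigeonhole losses at every level. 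Planarity is not an issue at any single level, since every gadget is local and the entire construction is a disjoint union of planar pieces glued at outer-face vertices, but one does have to confirm that the cumulative embedding survives all levels of the recursion.
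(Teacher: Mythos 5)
Your setup is fine: every outerplanar graph does have a vertex of degree at most two whose deletion leaves an outerplanar graph, and the star/$K_{2,M}$ gadgets you attach do preserve planarity since each is drawn inside a face of its own disjoint copy. The genuine gap is in your second case, and your own description of the fix does not close it. Your induction hypothesis is that Builder can force $k$ disjoint monochromatic copies of $H'$ \emph{in a common color}, but that color is chosen by Painter, not by Builder. Consequently there is no ``strategy for $H'$ in blue'' into which the blue skeleton can be fed: if Builder reruns the $H'$-strategy, Painter may again answer in red, and even if Painter answers in blue, the resulting blue copies of $H'$ occupy fresh vertices disjoint from the blue stars you harvested from the gadgets. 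Any edge Builder later draws to merge a blue $H'$ with a blue pendant structure can simply be colored red by Painter, since one red edge between two blue configurations completes nothing red. So the recursion makes no progress toward a blue $H$; the blue material you accumulate is forever a union of stars, and no choice of tower-type parameters $N(k),M(k)$ repairs this, because the obstruction is not quantitative.

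The standard repair (this is how the analogous statement for forests is proved in~\cite{2004GrHaKi}, and how~\cite{2014Pe} actually proceeds) is an asymmetric, two-target induction: for all outerplanar $H_1,H_2$ and all $s,t$, Builder can force $s$ disjoint red copies of $H_1$ or $t$ disjoint blue copies of $H_2$, by induction on $e(H_1)+e(H_2)$. The key move your proposal is missing is the \emph{linking} step: after stockpiling many disjoint red copies of $H_1-v_1$ and, separately, many disjoint blue copies of $H_2-v_2$, Builder introduces a new vertex joined to the attachment points of one red copy and one blue copy simultaneously, so that either color answer by Painter completes one of the two targets; repeating this with fresh pairs yields the desired disjoint copies. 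Note that even this requires care when the deleted vertices have degree two, since the linking gadget then has up to four edges and Painter can split the colors so that neither side is completed in one shot; ordering these edges and branching on Painter's replies is the real technical content of the proof, and it is absent from your outline.
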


\begin{proposition}[\cite{2014Pe}]
Builder wins the online Ramsey game for $K_{2,3}$ on planar graphs.
%Builder can force a monochromatic copy of $K_{2,3}$ on planar graphs.
\end{proposition}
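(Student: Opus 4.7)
The plan is to reduce to a setting where two vertices have many common neighbors, and then to exploit planarity-respecting ``scaffolding'' moves to force a monochromatic $K_{2,3}$. The overarching difficulty is that at any moment the current planar drawing allows a new vertex to be adjacent to only a bounded number of specified old vertices, so Builder cannot in a single move produce a fresh vertex with three monochromatic edges into a prescribed target set.

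In the first phase Builder introduces an edge $uv$ and then repeatedly adds vertices $w_1,w_2,\dots,w_N$, each joined to both $u$ and $v$. The graph $K_{2,N}$ together with the chord $uv$ is planar for every $N$. After Painter colors $uv$ (red, without loss of generality), each $w_i$ is classified by the pair $(c(uw_i),c(vw_i))\in\{R,B\}^2$. If three $w_i$'s are classified $RR$ or three are classified $BB$, we already have a monochromatic $K_{2,3}$ on $\{u,v\}$ together with those three $w_i$'s. Otherwise, by pigeonhole, for sufficiently large $N$ a large subset $W$ of the $w_i$'s shares a common mixed pattern, say $RB$, giving $u$ many red neighbors and $v$ many blue neighbors.

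In the second phase Builder tries to produce either a vertex $x\neq u$ with red edges to three members of $W$ (forcing a red $K_{2,3}$) or a vertex $y\neq v$ with blue edges to three members of $W$ (forcing a blue $K_{2,3}$). Since no face of the current drawing contains three $w_i$'s, one new vertex cannot suffice; instead Builder inserts ``helpers'' one at a time into faces of the form $uw_jvw_{j+1}$, joining each helper to the two bounding $w_i$'s and, carefully, to previously built helpers in a planar manner. Pigeonhole on the colors Painter assigns to helper edges then isolates a large consistent block, which Builder further extends.

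The main obstacle, and the substantive combinatorial content of the argument, is closing the loop: showing that some finite iteration of the scaffolding genuinely traps Painter into placing three monochromatic cross-edges between a common pair of vertices. Naively a single helper contributes at most two cross-edges of any fixed color into $W$, so the proof must amplify by combining several helpers into an effective ``virtual'' common neighbor, for example via short paths in the planar graph, and then apply a further Ramsey-style pigeonhole to these virtual neighbors. Designing this planar gadget, and verifying that planarity is preserved throughout, is the hard step; the preceding pigeonhole phases are then routine.
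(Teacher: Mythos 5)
The paper only cites this proposition from Pet\v{r}\'i\v{c}kov\'a and gives no proof of it, so your argument must stand on its own, and as written it has a genuine gap: its central step is explicitly left open, and the specific construction you propose to look for is forbidden by planarity. Phase 1 is fine --- $K_{2,N}$ plus the chord $uv$ is planar, and the pigeonhole classification of the $w_i$ by the pair $(c(uw_i),c(vw_i))$ either wins outright or yields a large set $W$ with, say, $uw_i$ red and $vw_i$ blue for all $w_i\in W$. But the stated goal of Phase 2 is unattainable: any vertex $x\notin\{u,v\}$ joined to three members $w_i,w_j,w_k$ of $W$ would make $\{u,v,x\}$ and $\{w_i,w_j,w_k\}$ the two sides of a $K_{3,3}$ subgraph, since $u$ and $v$ are already adjacent to every vertex of $W$. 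Builder is therefore never permitted to draw those edges; this is not a drawing-dependent obstruction that cleverer scaffolding can route around, because the configuration you are steering toward is itself non-planar as an abstract graph.

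The fallback you sketch --- amalgamating several helpers, each contributing at most two cross-edges into $W$, into a ``virtual'' common neighbor via short paths --- cannot close the gap either, because the target is a monochromatic $K_{2,3}$ \emph{subgraph}: it requires six genuine edges of one color between two vertices and three others, and a monochromatic path standing in for an edge does not supply one. You acknowledge that designing the planar gadget and verifying planarity throughout is ``the hard step,'' but that step is the entire content of the proposition; what remains after the routine pigeonhole preprocessing is an admission that the essential construction is missing. A repair would need a structurally different second phase, for instance one in which the two high-degree vertices of the eventual $K_{2,3}$ are chosen adaptively during the game rather than being the fixed pair $u,v$ to which every candidate third vertex is already joined.
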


In~\cite{2004GrHaKi}, it is shown that Painter wins the online Ramsey game for $C_3$ on outerplanar graphs, and the graphs containing $C_3$ as a subgraph are the only known graphs where Painter wins the online Ramsey game on outerplanar graphs.
On the other hand, they also demonstrate that Builder wins the online Ramsey game for $C_3$ on $2$-degenerate planar graphs. 
%; a \emph{$k$-degenerate graph} is a graph where every subgraph has a vertex of degree at most $k$. 

\begin{theorem}[\cite{2004GrHaKi}]\label{thm:outerplanar}
Painter wins the online Ramsey game for $C_3$ on outerplanar graphs.
%Builder cannot force a monochromatic copy of $C_3$ on outerplanar graphs.
\end{theorem}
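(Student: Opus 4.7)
The plan is to exhibit an explicit strategy for Painter. A natural first attempt is the greedy rule: when Builder introduces an edge $uv$, color it red unless doing so creates a red triangle, in which case color it blue. Painter loses this step only if there exist vertices $w, w'$ with $uw, vw$ both red and $uw', vw'$ both blue. The underlying configuration is a $4$-cycle $u w v w'$ together with the chord $uv$, i.e.\ a $K_4$ minus an edge, which is outerplanar. So outerplanarity alone does not prevent Builder from reaching this stuck position, and the strategy must be refined.

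The refinement is that Painter should maintain the invariant that, for every pair $\{u,v\}$ of vertices, all monochromatic cherries through $u,v$ --- common neighbors $w$ with both $uw$ and $vw$ the same color --- use the same color. Under this invariant, when an edge $uv$ is about to be drawn, at most one color can close a monochromatic triangle at $uv$, so the other is always safe. To preserve the invariant when coloring a freshly introduced edge $xy$, Painter inspects every pair $\{x,z\}$ and $\{y,z\}$ that each color choice would equip with a new monochromatic cherry, and selects the color that does not clash with a previously committed opposite-colored cherry.

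The crux of the proof is showing that a safe color always exists. Here outerplanarity is essential: since $K_{2,3}$ is a forbidden subgraph, any two vertices share at most two common neighbors, so the number of cherry-constraints attached to $xy$ is bounded by a constant. A case analysis shows that if both colors simultaneously clashed, the resulting edge set after adding $xy$ would contain either $K_{2,3}$ or $K_4$ as a subgraph, contradicting outerplanarity. The analysis separates according to how $xy$ sits in the block-cut tree of the current graph: if $xy$ is a bridge it creates no triangle at all; otherwise $xy$ lies in some $2$-connected block, whose outerplanar embedding is essentially unique up to reflection, which pins down the at most two common neighbors of $x$ and $y$ and the cherries they can produce.

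The main obstacle is the bookkeeping across the dynamically evolving block-cut structure, since new edges may extend a block, merge two blocks along a cut vertex, or glue a tree branch to a block. The approach is to anchor Painter's choice to a canonical face of the outerplanar embedding as it is being built. With this anchoring, the cherry invariant translates to a parity condition on the weak dual tree of the embedding inside each block, which can be checked locally whenever a new edge is inserted or two blocks coalesce, giving the inductive step that carries the argument through.
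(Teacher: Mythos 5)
Your key claim --- that whenever both colors are unusable for a new edge the graph must already contain $K_{2,3}$ or $K_4$ as a subgraph --- is false, and the cherry invariant you propose cannot in fact be maintained on outerplanar graphs. Concretely, let Builder first force three red edges $uv_1,uv_2,uv_3$ at a common vertex $u$ (via $K_{1,5}$ and pigeonhole, as in Strategy~\ref{str:builder}; everything stays outerplanar throughout). Builder then plays $v_1v_2$: the only cherry at $\{v_1,v_2\}$ is the red one through $u$, so Painter must color $v_1v_2$ blue, and this creates no clash. Builder then plays $v_2v_3$: red closes the red triangle $uv_2v_3$, so Painter must color it blue; but blue creates a blue cherry at the pair $\{v_1,v_3\}$ through $v_2$, which clashes with the red cherry at $\{v_1,v_3\}$ through $u$. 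The current graph (a fan on $\{u,v_1,v_2,v_3\}$ plus pendant edges) is outerplanar and contains neither $K_4$ nor $K_{2,3}$ as a subgraph, so your case analysis cannot rule this position out; moreover the failure mode here is ``one color closes a triangle, the other color clashes,'' which your dichotomy (``both colors clash'') does not even address. The violation happens to be harmless only because $v_1v_3$ can never be drawn ($G+v_1v_3=K_4$), but your induction requires the invariant at \emph{all} pairs, so the argument collapses at this step. Repairing it would mean restricting the invariant to pairs whose addition preserves outerplanarity and redoing the whole analysis, and the concluding discussion of block-cut trees, canonical faces, and a parity condition on the weak dual is an outline of intent rather than an argument.

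For comparison, the proof the paper relies on (given for the stronger Theorem~\ref{thm:K4minor}) uses a genuinely different invariant: assign to each cycle the number of red edges minus blue edges modulo $3$, and maintain that every ``zero'' cycle is \emph{good}, i.e.\ contains a zero subpath between two vertices that are also joined by an internally disjoint path in the host graph. This escape clause is precisely what absorbs configurations like the fan above: the zero $4$-cycle $uv_1v_2v_3$ is good via the chord $uv_2$. That is the structural point your cherry invariant misses.
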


\begin{proposition}[\cite{2004GrHaKi}]
Builder wins the online Ramsey game for $C_3$ on $2$-degenerate planar graphs. 
%Builder can force a monochromatic copy of $C_3$ on planar $2$-degenerate graphs.
\end{proposition}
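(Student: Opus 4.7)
The plan is to exhibit an explicit winning strategy for Builder that exploits the adaptive (online) nature of the game. A key observation is that every 2-degenerate graph is 3-chromatic, so no single 2-degenerate graph $G$ satisfies $G \to C_3$ offline (a proper 3-coloring of $G$ induces a 2-edge-coloring without a monochromatic triangle). Hence Builder's strategy must produce different final graphs against different Painter strategies, each containing a monochromatic triangle with respect to the specific coloring that arose during play.

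The overall goal is to force what I will call a \emph{closing configuration}: a pair of non-adjacent vertices $(a,b)$ possessing both a common red-neighbor $w_R$ and a common blue-neighbor $w_B$, and for which the edge $ab$ can be added without violating 2-degeneracy or planarity. Once such a configuration exists, Builder plays $ab$ and wins, since coloring $ab$ red completes the red triangle $a w_R b$ while coloring $ab$ blue completes the blue triangle $a w_B b$. To start, Builder plays a large star from a fresh vertex $v$; by pigeonhole, at least half the star edges receive the same color, WLOG red, producing red neighbors $u_1, \ldots, u_m$. Builder then plays the path $u_1 u_2 \cdots u_m$, and each edge is forced blue by Painter to avoid a red triangle through $v$. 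The graph so far is a fan with pendant leaves, which is straightforwardly 2-degenerate and planar.

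Note that the state already contains many pairs $(u_i, u_{i+2})$ with common red-neighbor $v$ and common blue-neighbor $u_{i+1}$; but the obvious closing edge $u_i u_{i+2}$ is forbidden, since it would complete a copy of $K_4$ on $\{v, u_i, u_{i+1}, u_{i+2}\}$. To work around this, Builder attaches a second layer via a fresh vertex $v'$ and plays edges $v' u_{i_j}$ for a selected subset of the $u_i$'s, so that pigeonhole again yields many same-colored $v'$-edges. Interacting this new monochromatic star at $v'$ with the pre-existing red star at $v$ and the blue path on the $u_i$'s, Builder identifies (after boundedly many additional rounds) a closing pair whose closing edge can be added without forming a $K_4$, or else forces a monochromatic triangle directly during the interaction. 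A finite case analysis over Painter's responses in these last stages completes the argument.

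The main obstacle is navigating the 2-degeneracy constraint, which forbids any $K_4$ subgraph and thereby prevents Builder from closing a triangle locally within dense substructures like a fan-plus-chord or a book-plus-edge. Builder must always route the closing edge through freshly introduced vertices rather than densifying the existing graph, which makes the pigeonhole bookkeeping and the ``reach'' of monochromatic neighborhoods delicate. The planarity constraint, by contrast, is automatically respected by the strategy, since all the constructions used (stars, paths, fans, and bipartite-like attachments via fresh vertices) admit planar embeddings at every step.
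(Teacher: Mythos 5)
There is a genuine gap: the entire difficulty of the problem is concentrated in your final step (``Builder identifies \dots a closing pair whose closing edge can be added without forming a $K_4$, or else forces a monochromatic triangle directly\dots A finite case analysis \dots completes the argument''), and that step is asserted rather than carried out. Worse, the specific structure you build makes it unlikely to go through as described. Note first that $2$-degeneracy forbids much more than $K_4$ subgraphs: it forbids \emph{every} subgraph of minimum degree at least $3$, in particular every wheel. In your fan, $v$ is red-adjacent to all of $u_1,\dots,u_m$ and the $u_i$ form a path, so \emph{any} chord $u_iu_j$ creates the wheel consisting of $v$ together with the cycle $u_i u_{i+1}\cdots u_j u_i$, whose minimum degree is $3$. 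Hence no edge between two $u_i$'s can ever be drawn again, not just the edges $u_iu_{i+2}$. The second-layer vertex $v'$ does not escape this: if, say, $v'u_1,v'u_4$ come back blue, the natural closing edge $u_1u_4$ (red triangle through $v$, blue triangle through $v'$) still creates the wheel on $\{v,u_1,u_2,u_3,u_4\}$; if they come back red, the pair $u_1,u_4$ has two common red neighbours and no common blue one, and Painter can decline to ever supply a common blue neighbour by colouring edges to fresh vertices red (no red triangle arises since $u_1u_4\notin E$). So a closing pair must involve vertices off the fan, and you have not exhibited one.

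For comparison, the construction in the cited source (reproduced in this paper as the proof of Proposition~\ref{prop-K_4-free}) avoids the apex vertex entirely: Builder forces a monochromatic path $v_1\cdots v_7$ using the forest strategy, so that the red path comes with no vertex dominating it, and then adds only five chords ($v_1v_5$, $v_3v_7$, then $v_1v_3$, $v_3v_6$, $v_6v_1$, with a short fallback if $v_1v_5$ or $v_3v_7$ is coloured blue). One can check directly that the resulting graph (a forest with an $11$-edge core on seven vertices) is $2$-degenerate and planar at every step, which is what makes that argument work where the fan-based one stalls. Your opening observation that no fixed $2$-degenerate graph arrows $C_3$ offline is correct and a good motivation, but the strategy itself needs to be rebuilt around a dominating-vertex-free monochromatic path rather than a star.
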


%A \emph {minor} of a graph $G$ is a graph obtainable from $G$ by deleting edges and contracting edges. 
%If $H$ is a minor of $G$, then $G$ has a subgraph $F$ contractible to $H$, then we say $F$ is an \emph{$H$-minor} of $G$, and say that $G$ has an $H$-minor. 
%
%If every graph of a class $C$ does not have an $H$-minor, we say $C$ is \emph{$H$-minor-free}.
%In the sense of minor, the class of $K_4$-minor-free graphs strictly contains the class of outerplanar graphs. Also, the class of planar $2$-degenerate graphs strictly contains the class of outerplanar graphs.%(why?)
%\[\{\mbox{outerplanar graphs}\} \subsetneq \{K_4\mbox{-minor-free graphs}\} \subsetneq \{\mbox{planar $2$-degenerate graphs}\}\]

We extend the class of graphs where Painter wins the online Ramsey game for $C_3$ from outerplanar graphs to $K_4$-minor-free graphs. 
Our proof is a generalization of the proof of Theorem~\ref{thm:outerplanar} in~\cite{2004GrHaKi}.
%Builder cannot force a monochromatic copy of $C_3$ on $K_4$-minor free graphs. 

\begin{thmk4m}\label{theorem-2nd}
Painter wins the online Ramsey game for $C_3$ on $K_4$-minor-free graphs.
%Builder cannot force a monochromatic copy of $C_3$ on $K_4$-minor-free graphs.
\end{thmk4m}

We initiate the study of characterizing the graphs $F$ such that for a given graph $H$, Painter wins the online Ramsey game for $H$ on $F$-free graphs. 
%A graph class is \emph {$\{F_1, \ldots, F_k\}$-free} if every graph in the class does not contain any of the graphs $F_1, \ldots, F_k$ as a subgraph. 
A graph class is \emph{$F$-free} if every graph in the class does not contain $F$ as a subgraph. 
%If $k=1$, then we write simply $F_1$-free graphs instead of $\{F_1\}$-free graphs. 
%We characterize the class of $F$-free graphs when Painter wins the online Ramsey game for $C_3$ on $F$-free graphs, except when $F$ is one special graph. 
We characterize all graphs $F$ such that Painter wins the online Ramsey game for $C_3$ on $F$-free graphs, except when $F$ is one special graph. 
%cannot force a monochromatic copy of $C_3$ on. 
%Note that 
We put the constraint that $F$ has no isolated vertices because the game is defined to have infinitely many isolated vertices as the initial state. 
The following theorem is our main result. 

\begin{thmc3}\label{thm-c3}
Let $X_1, \ldots, X_5$ be the graphs in Figure~\ref{fig-c3}, and let $F$ be a graph with no isolated vertices.
Given that $F$ is not isomorphic to $X_5$, Painter wins the online Ramsey game for $C_3$ on $F$-free graphs if and only if $F$ is isomorphic to a subgraph of a graph in $\{X_1, X_2, X_3, X_4\}$.
\end{thmc3}

\begin{figure}[h]
	\begin{center}
		\includegraphics[scale=0.9]{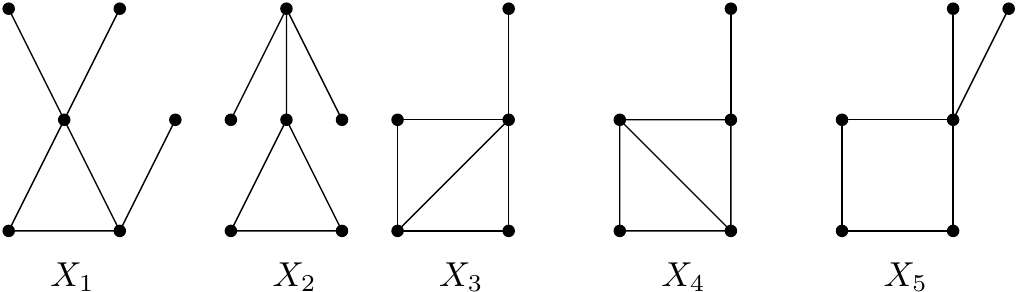}
	\end{center}
  \caption{The graphs $X_1,X_2,X_3,X_4,X_5$.}
  \label{fig-c3}
\end{figure}

This paper is organized as follows. In Section~\ref{section-minor}, we prove Theorem~\ref{thm:K4minor} and in Section~\ref{section-subgraph}, we prove Theorem~\ref{thm-c3}.
Section~\ref{section-subgraph} is further divided into three subsections.
Subsection~\ref{builder} and Subsection~\ref{painter} deals with the classes of graphs where Builder and Painter wins, respectively. 
Subsection~\ref{finaltouch} concludes Section~\ref{section-subgraph}.

For an edge $e$, we say that ``Painter cannot color $e$'' if there is a monochromatic copy of $H$ whether Painter colors $e$ red or blue; in other words, Builder wins the game no matter what color Painter uses on $e$. 
%since this is the end of the game, and Builder wins the game. 
In particular, we say that ``Painter cannot color $e$ red (blue)'' or ``Painter must color $e$ blue (red)'', if we already observed that Painter will eventually lose (a monochromatic copy of $H$ will appear) when Painter colors $e$ red (blue). 

\section{The online Ramsey game for $C_3$ on $K_4$-minor-free graphs}\label{section-minor}

%A graph is \emph{$2$-degenerate} if every subgraph has a vertex of degree at most $2$.
%A \emph{planar graph} is a graph that can be embedded in the plane without crossing edges.
%A graph is \emph{outerplanar} if it is planar and all vertices are incident with one face.
%We say a graph $H$ is a \emph{minor} (or \emph{$H$-minor}) of a graph $G$ if $H$ can be obtained from $G$ by deleting vertices, deleting edges, and contracting edges.

Grytczuk, Ha{\l}uszczak, and Kierstead~\cite{2004GrHaKi} proved that Builder wins the online Ramsey game for $C_3$ on $2$-degenerate planar graphs, 
%can force a monochromatic copy of $C_3$ on planar $2$-degenerate graphs
but Painter wins the online Ramsey game for $C_3$ on outerplanar graphs.
%Builder cannot force a monochromatic copy of $C_3$ on outerplanar graphs. 
%, which is the class of graphs that do not contain $K_{2, 3}$ and $K_4$ as a minor. 
%We want to investigate a superset $\mathcal{S}$ of outerplanar graphs such that Painter wins the online Ramsey game $(C_3, \mathcal{S})$. 
%Builer cannot force a monochromatic copy of $C_3$ on $\mathcal{S}$. 
We extend the class the graphs on which Painter is known to win the online Ramsey game for $C_3$.
Since a graph is outerplanar if and only if it does not contain $K_{2,3}$ and $K_4$ as a minor, we focus on $K_{2,3}$-minor-free graphs and $K_4$-minor-free graphs.
We show that Painter wins the online Ramsey game for $C_3$ on $K_{4}$-minor-free graphs, but Builder still wins the online Ramsey game for $C_3$ on $K_{2, 3}$-minor-free graphs. 
%We first observe that allowing a $K_4$-minor does not help the Painter, and then we prove that allowing a $K_{2, 3}$-minor (but still forbidding a $K_4$-minor) makes Builder be able to force a monochromatic copy of $C_3$ on the class. 

The following proposition shows that Builder wins the online Ramsey game for $C_3$ on $K_{2,3}$-minor-free graphs. 
Builder will use Strategy~\ref{str:builder}.
%can force a monochromatic copy of $C_3$ on $K_{2,3}$-minor free graphs.
%We say for a graph $G$, $G$ is \emph{red} and \emph{blue} if all edges of $G$ is colored by red and blue, respectively.

%They proved that all $k$-colorable graphs are unavoidable on $k$-colorable graphs;
%namely, they proved that $C_3$ is unavoidable on $3$-colorable graphs. 
%Yet, $C_3$ is avoidable on $K_4$-minor free graphs, which is a subclass of $3$-colorable graphs. 
%
%$$\{\mbox{outerplanar}\} \subsetneq \{K_4\mbox{-minor free}\} \subsetneq \{\mbox{$3$-colorable graphs}\}$$

%\marginpar{IC: treewidth 2 and $K_4$-minor free graphs are different.}

\begin{strategy}\label{str:builder}
Builder draws a copy of $K_{1, 5}$. Let $u$ be the vertex of degree $5$. 
By the pigeonhole principle, Painter will color at least three edges with the same color, say $uv_1,uv_2,uv_3$.
Builder draws the edges $v_1v_2$, $v_2v_3$, and $v_3v_1$.
\end{strategy}

\begin{proposition}\label{prop:k23}
Builder wins the online Ramsey game for $C_3$ on $K_{2,3}$-minor-free graphs.
%Builder can force a monochromatic copy of $C_3$ on $K_{2,3}$-minor free graphs.
\end{proposition}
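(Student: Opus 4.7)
The plan is to verify that Strategy~\ref{str:builder} is a winning strategy for Builder. This requires two things: that every intermediate graph Builder constructs is $K_{2,3}$-minor-free (so every move is legal), and that at the end of the strategy Painter has been forced into a monochromatic $C_3$.

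The forcing half is the easy part. By the pigeonhole principle, among the five edges of the $K_{1,5}$ at least three share a color, say $uv_1, uv_2, uv_3$ are all red, matching the labelling in the strategy. Then for any pair $ij \in \{12, 23, 31\}$, coloring $v_iv_j$ red would close a red triangle with $u$, so Painter is forced to color each of $v_1v_2$, $v_2v_3$, $v_3v_1$ blue; but then $v_1v_2v_3$ is itself a blue $C_3$, and Builder wins.

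The legality half reduces to showing that the final graph $G$ on vertex set $\{u, v_1, \ldots, v_5\}$, namely a $K_4$ on $\{u, v_1, v_2, v_3\}$ together with the two pendant edges $uv_4, uv_5$, is $K_{2,3}$-minor-free; the intermediate graphs are subgraphs of $G$ and so inherit this property. The key observation will be that $v_4$ and $v_5$ are leaves attached to $u$: each has degree one, so neither can form a branch set of its own in a $K_{2,3}$-minor, since every vertex of $K_{2,3}$ has degree at least two. Hence $v_4$ and $v_5$ can only appear inside the branch set containing $u$, so any $K_{2,3}$-minor of $G$ would already be a $K_{2,3}$-minor of the graph obtained by deleting $v_4$ and $v_5$, which is just $K_4$. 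Since $K_4$ has only four vertices and $K_{2,3}$ has five, no such minor exists.

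The main obstacle, to the extent there is one, is handling the minor argument cleanly: one must rule out that the pendant vertices contribute to a $K_{2,3}$-minor, via the degree constraint on branch sets. Once that reduction to $K_4$ is in hand, both halves of the proof are immediate, and Strategy~\ref{str:builder} suffices.
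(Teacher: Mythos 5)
Your proposal is correct and follows exactly the paper's approach: it runs Strategy~\ref{str:builder}, forces the monochromatic $C_3$ by pigeonhole, and checks legality. The only difference is that you explicitly verify the $K_{2,3}$-minor-freeness of the final graph (via the observation that the pendant vertices $v_4,v_5$ cannot contribute to a minor, reducing to $K_4$ on four vertices), a step the paper simply asserts; your argument for it is sound.
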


%The following strategy is a winning strategy of Builder for the online Ramsey game $(C_3, K_{2,3}\text{-minor-free graphs})$.

\begin{proof}
Builder uses Strategy~\ref{str:builder}.
Assume $uv_1, uv_2, uv_3$ are colored red.
If Painter colors one of $v_1v_2, v_2v_3, v_3v_1$ red, then this creates a red $C_3$. 
% (say, $v_1v_2$ is red), then this creates a red $C_3$ with vertices $u$, $v_1$, and $v_{2}$.
%Therefore, Painter must color $v_1v_2$ blue.
%Similarly, Painter should color $v_2v_3$ and $v_1v_3$ blue.
Therefore Painter must color all of $v_1v_2, v_2v_3, v_3v_1$ blue, but then this creates a blue $C_3$ with vertices $v_1$, $v_2$, and $v_{3}$.

The graph resulting from Strategy~\ref{str:builder} has no $K_{2,3}$ as a minor.
Thus Builder wins the online Ramsey game for $C_3$ on $K_{2,3}$-minor-free graphs.
\end{proof}

Now, we will prove that Painter wins the online Ramsey game for $C_3$ on $K_{4}$-minor-free graphs. 
The key idea of this proof stemmed from the proof of Theorem~\ref{thm:outerplanar} in~\cite{2004GrHaKi}. 

%Builder cannot force a monochromatic copy of $C_3$ on $K_4$-minor free graphs. In order to prove this, we need some definitions and the following lemma.
%\[\{\mbox{outerplanar graphs}\} \subsetneq \{K_4\mbox{-minor-free graphs}\} \subsetneq \{\mbox{planar $2$-degenerate graphs}\}\]
%We say a colored cycle(path) is zero, red, blue if 
%the number of red edges minus the number of blue edges is 
%$0$, $1$, $-1$ (mod $3$), respectively.
%For a graph $G$ and its vertices $x,y$,
%we denote $f_P(x,y)$ the number of red edges minus the number of blue edges in the path from $x$ to $y$ on P.
%We denote $f_C$ the number of red edges minus the number of blue edges in the cycle C.
%Here is a definition of a branch set and a branch vertex. 

%Usual definition of the minor relation uses a vertex deletion, an edge deletion, and an edge contraction. 
%Here, we introduce another well known definition of the minor relation. 
%Given graphs $G$, $H$ and a vertex $u$ of $H$ where $H$ is a minor of $G$, 
Recall that a graph $G$ contains $H$ as a minor if
%Given a graph $G$ and its minor $H$,
there exists a set $\mathcal S$ of pairwise disjoint subsets of $V(G)$ satisfying the following:
\begin{itemize}
\item For every vertex $u$ of $H$, there is an element $S_u\in \mathcal S$ such that $G[S_u]$ is connected. 
\item For every edge $uv$ of $H$, there is an edge between $S_u$ and $S_v$. 
\end{itemize}
We call $S_u$ the \emph{branch set} of $u$ in an $H$-minor of $G$ for every vertex $u$ of $H$. 
%For a vertex $u$ of $H$, the \emph{branch set} of $u$ in an $H$-minor of $G$ is a set of (connected) vertices of $G$ that represents $u$ in the $H$-minor. 
When the branch set has one vertex, we also call it a \emph{branch vertex}.
For two vertices $x, y$ in $G$, an \emph {$x, y$-path} is a path in $G$ from $x$ to $y$.

\begin{lemma}\label{lem:ordering}
%Let $G$ be a $K_4$-minor free graph with an edge $xy$.
Let $xy$ be an edge of a $K_4$-minor-free graph $G$, and
%Let $P,Q$ be two paths from $x$ to $y$ in $G-e$ and 
let $P$ and $Q$ be two $x, y$-paths in $G-xy$.
For an integer $k\ge 3$, if $x=v_1, \ldots, v_k=y$ are the common vertices of $P$ and $Q$, then these vertices are in the same order on both $P$ and $Q$. 
%$v_1,v_2,\ldots,v_n$ be the common vertices of $X_4$ and $Q$ with $n>2$.
%If $\dist_P(x,v_1)<\dist_P(x,v_2)<\cdots<\dist_P(x,v_n)$,
%then $\dist_Q(x,v_1)<\dist_Q(x,v_2)<\cdots<\dist_Q(x,v_n)$.
%In other words, $v_1, \ldots, v_k$ are on the same order on $X_4$ and $Q$.
\end{lemma}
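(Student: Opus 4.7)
The plan is to proceed by contradiction. Suppose that the common vertices $v_1, \ldots, v_k$ appear in different orders on $P$ and $Q$; we shall exhibit a $K_4$-minor of $G$, contradicting the hypothesis that $G$ is $K_4$-minor-free.

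Since the orderings differ, there exists a pair of indices $(i, j)$ with $1 < i < j < k$ such that $v_j$ precedes $v_i$ on $Q$. The first step is to choose such a pair so that $j - i$ is minimum. This minimality forces that no common vertex $v_c$ with $i < c < j$ lies on the $Q$-subpath strictly between $v_j$ and $v_i$, since otherwise one of $(i, c)$ or $(c, j)$ would give an inversion with smaller index gap. The four vertices $\{x, v_i, v_j, y\}$ are then used as the ``cores'' of the four branch sets of the desired $K_4$-minor, and the six subpaths in $P \cup Q \cup \{xy\}$ supply the needed connections: the edge $xy$ joins $x$ and $y$; the three $P$-subpaths $P[x, v_i]$, $P[v_i, v_j]$, $P[v_j, y]$ split $P$ at $v_i, v_j$; and the three $Q$-subpaths $Q[x, v_j]$, $Q[v_j, v_i]$, $Q[v_i, y]$ split $Q$ at $v_j, v_i$ in the order they appear on $Q$. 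Together, these provide a candidate path between every pair of cores.

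To turn these candidates into branch sets $S_x$, $S_i$, $S_j$, $S_y$ that are pairwise disjoint connected subgraphs of $G$, each pair joined by an edge of $G$, the internal vertices of the six paths must be distributed among the four sets. Non-common internal vertices each lie on a unique one of the six paths and can be absorbed into whichever endpoint preserves both connectedness and the required cross-edge. By the minimality of $j - i$, the subpaths $P[v_i, v_j]$ and $Q[v_j, v_i]$ share no common vertex other than their endpoints $v_i, v_j$, which eliminates the dangerous conflicts near the ``center'' of the inversion. The remaining common vertices $v_c$ with $c < i$ or $c > j$ are assigned to $S_x$ or $S_y$ according to which side of the inversion their $P$- and $Q$-subpaths lie on, and the edge $xy$ ensures that the needed $S_x$--$S_y$ adjacency is retained.

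The main obstacle is that a common vertex $v_c$ with $c < i$ or $c > j$ need not lie on matched $P$- and $Q$-segments, so a naive assignment can disconnect $S_x$ or $S_y$. To bypass this delicate point I would alternatively argue by induction on $k$: if the second common vertex on $Q$ is $v_2$, then $v_2$ is the second common vertex on both $P$ and $Q$, so contracting $P[x, v_2] \cup Q[x, v_2]$ in $G$ to a single vertex produces a smaller $K_4$-minor-free graph with only $k - 1$ common vertices whose two paths still have a difference in ordering, and the inductive hypothesis applies. Otherwise, writing $v_m$ for the second common vertex on $Q$ (so $m > 2$), the corners $\{x, v_2, v_m, y\}$ cleanly support the $K_4$-minor construction outlined above, because then the subpaths $P[x, v_2]$ and $Q[x, v_m]$ have no common vertex other than $x$. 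Either way, we reach the desired contradiction and conclude that the two orderings must coincide.
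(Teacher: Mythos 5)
There is a genuine gap in the ``otherwise'' branch of your induction, which is the only place where a $K_4$-minor actually has to be produced. With corners $\{x,v_2,v_m,y\}$, your six designated connections are the edge $xy$, the three $P$-segments $P[x,v_2]$, $P[v_2,v_m]$, $P[v_m,y]$, and the two $Q$-segments $Q[x,v_m]$, $Q[v_2,y]$. The disjointness you verify (that $P[x,v_2]$ and $Q[x,v_m]$ meet only in $x$) is the unproblematic one; the problematic overlaps are between $Q[v_2,y]$ and the two $P$-segments $P[v_2,v_m]$ and $P[v_m,y]$. Any common vertex $v_c$ with $c\notin\{1,2,m,k\}$ lies internally on one of those two $P$-segments, and on $Q$ it lies somewhere after $v_m$; nothing prevents it from lying after $v_2$ on $Q$, i.e.\ internally on $Q[v_2,y]$. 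Concretely, take $k=5$ with $P$-order $v_1,v_2,v_3,v_4,v_5$ and $Q$-order $v_1,v_4,v_2,v_3,v_5$: then $m=4$ and $v_3$ is an internal vertex of both $P[v_2,v_4]$ (your $v_2$--$v_m$ connection) and $Q[v_2,y]$ (your $v_2$--$y$ connection), so the six paths are not internally disjoint and the branch sets cannot be formed by the assignment you describe. This is exactly the ``delicate point'' you yourself flagged for the first construction, and switching to the corners $\{x,v_2,v_m,y\}$ does not remove it. The contraction step in the other branch is fine, but it only applies when the second common vertices already agree, so it never reaches this case, and the proof is incomplete without a genuine argument here.

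For comparison, the paper avoids this bookkeeping entirely by proving a confinement claim: if $j>i+1$ and there is a $v_i,v_j$-path $R$ internally disjoint from $P$, then no path internally disjoint from $P$ can join $\{v_{i+1},\dots,v_{j-1}\}$ to the rest of $P$ --- otherwise one gets a $K_4$-minor using only $R$, the escaping path, the segment $P[v_i,v_j]$, and (crucially) the edge $xy$ together with the two end-segments of $P$ to supply the last connection. Applying this with $R$ equal to a subpath of $Q$ between two common vertices that are consecutive on $Q$ but not consecutive on $P$ shows that $Q$ could never visit the skipped vertices $v_{i+1},\dots,v_{j-1}$ at all, which is the contradiction. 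That claim is the ingredient your Case 2 is missing.
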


\begin{proof}
%Denote $P=w_1w_2\ldots w_m$.
The claim is trivial when $k=3$, so we may assume $k>3$.
%since if $k=3$, then there is nothing to prove.
By reordering the indices, let $v_1, \ldots, v_k$ be the order of these vertices on $P$.

We claim that for $j>i+1$, %and $i>1$ or $j<m$,
if there is a $v_i, v_j$-path $R$ in $G$ that is internally disjoint with $P$,
then there is no path from $\{v_{i+1},v_{i+2},\ldots,v_{j-1}\}$ to $V(P)\setminus\{v_i,v_{i+1},\ldots,v_j\}$ that is internally disjoint with $P$.
Suppose not. 
%Take a shortest $a, b$-path $P'$ where $a\in\{v_{i+1},v_{i+2},\ldots,v_{j-1}\}$ and $b\in V(P)\setminus\{v_i,v_{i+1},\ldots,v_j\}$.
Take an $a, b$-path $P'$ where $a\in\{v_{i+1},v_{i+2},\ldots,v_{j-1}\}$ and $b\in V(P)\setminus\{v_i,v_{i+1},\ldots,v_j\}$.
If $P'$ and $R$ share a vertex $z$, then $G$ has a $K_4$-minor where the branch vertices are $z, v_i, v_j, a$.
If $P'$ and $R$ are vertex disjoint, then $G$ has a $K_4$-minor where the branch vertices are $a, b, v_i, v_j$. 
%Then there exist six internally vertex-disjoint paths whose ends are in $a,b,v_i,v_j$ in $G$ 
%which gives $K_4$ as a minor.(WANT TO FIX THIS SENTENCE)

Thus, if $R$ is a subpath of $Q$, then $Q$ can never visit $v_{i+1},v_{i+2},\ldots,v_{j-1}$
because otherwise $Q$ will contain a subpath from $\{v_{i+1},v_{i+2},\ldots,v_{j-1}\}$  to $x$ or $y$.
This is a problem since $Q$ is an $x, y$-path and must go through all of $v_1, \ldots, v_k$.
%Therefore, we can easily conclude that if $\dist_P(x,v_1)<\dist_P(x,v_2)<\cdots<\dist_P(x,v_n)$,
%then $\dist_Q(x,v_1)<\dist_Q(x,v_2)<\cdots<\dist_Q(x,v_n)$.
Therefore, we conclude that $v_1, \ldots, v_k$ are in the same order on both $P$ and $Q$.
\end{proof}

Given two vertices $u, v$ on a path $P$, let $P[u, v]$ denote the subpath of $P$ from $u$ to $v$.
%A graph is \emph{$2$-edge-colored} if every edge is colored by red or blue.
For a $2$-edge-colored graph $H$, 
let $f(H)$ denote the number of red edges minus the number of blue edges in $H$ modulo $3$. 
%A $2$-edge-colored graph $H$ is (of type) \emph {zero, red}, and \emph {blue} if $f(H)$ is $0$, $1$, and $2$, respectively.
%A $2$-edge-colored graph $H$ is \emph{balanced}, \emph{RED}, and \emph{BLUE} if $f(H)$ is $0$, $1$, and $2$, respectively.
A $2$-edge-colored graph $H$ is \emph{zero}, \emph{positive}, and \emph{negative} if $f(H)$ is $0$, $1$, and $2$, respectively.
Given a $2$-edge-colored graph $G$, a zero cycle $C$ is \emph{good} if there exist two vertices $\alpha,\beta$ on $V(C)$ 
such that an $\alpha, \beta$-path on $C$ is zero and 
%there is an $\alpha,\beta$-path where its internal vertices are disjoint from $V(C)$.
%$G$ has a path from $\alpha$ to $\beta$ whose internal vertices are disjoint from $C$.
there exists an $\alpha, \beta$-path in $G$ whose internal vertices are disjoint from $V(C)$.

\begin{theorem}\label{thm:K4minor}
Painter wins the online Ramsey game for $C_3$ on $K_{4}$-minor-free graphs.
%Builder cannot force a monochromatic copy of $C_3$ on $K_4$-minor free graphs.
\end{theorem}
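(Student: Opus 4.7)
The plan is to equip Painter with an explicit strategy that extends the outerplanar approach of~\cite{2004GrHaKi} from Theorem~\ref{thm:outerplanar}, now built on the ordering provided by Lemma~\ref{lem:ordering}. I would first note that a monochromatic triangle is a zero cycle, since three edges of a single color contribute $\pm 3 \equiv 0 \pmod{3}$. Hence Painter wins as long as no zero triangle is ever forced in the colored graph.

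Painter's invariant will be that the colored graph contains no good zero cycle. For each new edge $uv$ that Builder plays, Painter colors it red or blue in a way that simultaneously avoids creating a zero triangle and avoids creating a new good zero cycle. The role of the \emph{good} qualifier is that the external $\alpha,\beta$-path required by the definition fits exactly into the three-path setup of Lemma~\ref{lem:ordering}: the two arcs of the zero cycle $C$ together with the external path form three $\alpha,\beta$-paths, and Lemma~\ref{lem:ordering} constrains how such triples of paths can share common vertices in a $K_4$-minor-free graph.

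The heart of the proof is showing that Painter's coloring rule is always feasible. I would assume toward a contradiction that Painter is stuck on some edge $uv$, and collect one witness for each color: a length-$2$ $u,v$-path completing to a zero triangle, or a longer $u,v$-path completing to a new good zero cycle. The new edge $uv$ itself plays the role of a length-$1$ external path between $u$ and $v$ inside the combined witness structure. Using Lemma~\ref{lem:ordering} to decompose the two witness $u,v$-paths along their consistent common-vertex sequence, I would extract a pre-existing good zero cycle in the graph before $uv$ was added, contradicting the invariant. This mirrors the scheme executed in the simpler outerplanar setting in~\cite{2004GrHaKi}.

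The main obstacle is precisely this feasibility step. The two witness $u,v$-paths may share internal vertices in nontrivial ways, and the case analysis must split them along the common-vertex sequence guaranteed by Lemma~\ref{lem:ordering}, identify the pair $\alpha,\beta$ contributing a zero arc, and locate the external $\alpha,\beta$-path required for goodness inside the pre-existing graph rather than through the brand-new edge $uv$. One must also manage the two failure modes jointly, since avoiding a zero triangle and avoiding a good zero cycle can interact; the heart of the argument is checking that some color always avoids both.
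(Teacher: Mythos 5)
Your overall architecture (induction on edges, assuming Painter is stuck, taking one witness $x,y$-path per color, and using Lemma~\ref{lem:ordering} to line up their common vertices) matches the paper's proof, but your invariant is inverted, and this is fatal rather than cosmetic. You propose to maintain ``the colored graph contains no good zero cycle,'' whereas the correct invariant is that \emph{every} zero cycle is good, i.e.\ no \emph{bad} zero cycle exists. Good zero cycles are the harmless ones: a monochromatic triangle has arcs with $f$-value $\pm1$ or $\pm2$, never $0$, so it can never be good; hence ``every zero cycle is good'' already excludes monochromatic triangles, while ``no good zero cycle'' does not, and forbidding good zero cycles is forbidding exactly the wrong objects. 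Concretely, your strategy loses: let Builder draw a $4$-cycle $x,a,y,b$ on fresh vertices. If Painter ends up with $xa,ay$ red and $yb,bx$ blue, this is a zero cycle that is \emph{bad} (there is no external path between the two vertices $a,b$ spanning its zero arcs), so your invariant is satisfied and your rule permits this coloring; Builder then plays $xy$, and both colors complete a monochromatic triangle. The graph is $K_4$ minus an edge, hence $K_4$-minor-free. The paper's invariant forbids precisely this bad zero $4$-cycle, forcing Painter to color the fourth edge so that the $4$-cycle is not zero.

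The inversion also breaks your feasibility argument at its key step. In the paper, when the two witness paths $P^r$ and $P^b$ disagree on a segment $[v_i,v_{i+1}]$, the cycle $D$ formed by the two differing segments is shown to be zero, and the induction hypothesis that $D$ is \emph{good} supplies the external $\alpha,\beta$-path; that path is then used either to build a $K_4$-minor or to certify that $C^r$ or $C^b$ is itself good, yielding the contradiction. Under your invariant, a zero cycle $D$ existing before $e$ was drawn would be \emph{not} good, which gives you nothing to work with, and ``extracting a pre-existing good zero cycle to contradict the invariant'' has no mechanism to produce the required external path. (A smaller point: Lemma~\ref{lem:ordering} concerns two $x,y$-paths in $G-xy$, not a ``three-path setup''; the third path through the new edge is not part of its hypothesis.) To repair the proof, flip the invariant to ``every zero cycle is good,'' observe that this implies no monochromatic $C_3$, and rerun your segment analysis with $D$'s goodness as the engine of the contradiction.
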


\begin{proof}
Assume Builder drew the edge $e=xy$ to the previous graph to obtain the current graph $G$, which is $2$-edge-colored except for $e$.
Since the initial graph has no edges, it suffices to show that if $G-e$ has a $2$-edge-coloring such that every zero cycle is good, then this coloring can be extended to $G$ so that every zero cycle is good.
Note that if every zero cycle is good, then there cannot be a monochromatic $C_3$, since a monochromatic $C_3$ is a zero cycle and cannot have a zero path as a
 subgraph.

Suppose whenever Painter tries to color $e$ red and blue in $G$, there arises a zero cycle $C^r$ and $C^b$, respectively, that is not good.
Let $P^r=C^r-e$ and $P^b=C^b-e$. 
Since $C^r$ and $C^b$ are zero cycles, $P^r$ is negative and $P^b$ is positive.
Let $x=v_1, v_2, \ldots, v_t=y$ be the common vertices of $P^r$ and $P^b$.
By Lemma~\ref{lem:ordering}, they are in the same order on $P^r$ and $P^b$.
Without loss of generality, let $v_1, \ldots, v_t$ be the ordering of these vertices on $P^r$ and $P^b$.
Note that $P^r[v_j,v_{j+1}]=P^b[v_j,v_{j+1}]$ might happen for some $j\in\{1, \ldots, t-1\}$, but there must exist an $i$ where $P^r[v_i,v_{i+1}]\neq P^b[v_i,v_{i+1}]$, 
since $P^r$ is negative and $P^b$ is positive. 
Fix such an $i$, and note that $P^r[v_i, v_{i+1}]$ and $P^b[v_i, v_{i+1}]$ are internally disjoint. 

We claim that %both $P^r[v_i,v_{i+1}]$ and $P^b[v_i,v_{i+1}]$ are not of type zero.
 both $P^r[v_i,v_{i+1}]$ and $P^b[v_i,v_{i+1}]$ are not zero.
Without loss of generality, assume $P^r[v_i,v_{i+1}]$ was zero.
Since $P^b[v_i,v_{i+1}]$ is a path from $v_i$ to $v_{i+1}$ whose internal vertices are disjoint from $V(C^r)$, this implies that
$C^r$ is a good cycle, which is a contradiction.

Now we claim that $P^r[v_i,v_{i+1}]$ and $P^b[v_i,v_{i+1}]$ are either both positive or both negative. 
Without loss of generality assume $P^r[v_i,v_{i+1}]$ is positive and $P^b[v_i,v_{i+1}]$ is negative. 
Since the cycle $D$ formed by $P^r[v_i,v_{i+1}]$ and $P^b[v_i,v_{i+1}]$ is zero even before Builder drew $e$, 
we know that $D$ is a good cycle by the induction hypothesis. 
Therefore, there are two vertices $\alpha,\beta$ on $D$ where 
an $\alpha, \beta$-path on $D$ is zero and
$G-e$ (also, $G$) has an $\alpha, \beta$-path whose internal vertices are disjoint from $V(D)$.
Note that this latter $\alpha, \beta$-path cannot share its internal vertices with $P^r$ and $P^b$ since this would create a $K_4$-minor.
If both $\alpha, \beta$ are on the same $P^j$ for some $j\in\{r,b\}$, then because there are two zero $\alpha, \beta$-paths (on $C^j$) and another internally disjoint $\alpha, \beta$-path, we can conclude $C^j$ is good, which is a contradiction. 
If $\alpha, \beta$ are on different paths of $P^r, P^b$, 
then $G$ contains $K_4$ as a minor where the branch vertices are $v_i, v_{i+1}, \alpha, \beta$, which is again a contradiction. 

Now we know that $P^r[v_i,v_{i+1}]$ and $P^b[v_i,v_{i+1}]$ are both positive or both negative for every $i\in\{1, \ldots ,t-1\}$, 
which implies that $P^r$ and $P^b$ are both positive or both negative, which contradicts that $P^r$ is negative and $P^b$ is positive.

Thus, Painter can color $e$ so that every zero cycle in $G$ is good, and hence there is no monochromatic $C_3$ in the coloring Painter produces. 
\end{proof}

We remark that the proof of Theorem~\ref{thm:K4minor} works for not only $K_4$-minor-free graphs, but also $K_4$-topological-minor-free graphs.

\section{The online Ramsey game for $C_3$ on $F$-free graphs}\label{section-subgraph}

In this section, we attempt to characterize all graphs $F$ such that Painter wins the online Ramsey game for $C_3$ on $F$-free graphs. %Builder cannot force a monochromatic copy of $C_3$ on $H$-free graphs.
We determine the winner of the game in all cases except when $F$ is the graph $X_5$, which is in Figure~\ref{fig-c3}.
Recall that we put the constraint that $F$ has no isolated vertices because the game is defined to have infinitely many isolated vertices as the initial state. 
Here is our main result. 

\begin{theorem}\label{thm-c3}
Let $X_1, \ldots, X_5$ be the graphs in Figure~\ref{fig-c3}.
Suppose that $F$ is a graph with no isolated vertices that is not isomorphic to $X_5$. 
Painter wins the online Ramsey game for $C_3$ on $F$-free graphs if and only if $F$ is isomorphic to a subgraph of a graph in $\{X_1, X_2, X_3, X_4\}$.
\end{theorem}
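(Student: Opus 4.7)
The plan is to prove each direction of the equivalence separately, which mirrors the paper's division of Section~\ref{section-subgraph} into the ``Builder wins'' and ``Painter wins'' subsections.

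For the sufficiency direction — that $F$ being isomorphic to a subgraph of some $X_i$ with $i\in\{1,2,3,4\}$ implies Painter wins — I would begin with a monotonicity observation: if $F\subseteq X_i$, then every graph containing $X_i$ as a subgraph also contains $F$, so the class of $F$-free graphs is contained in the class of $X_i$-free graphs. Any winning Painter strategy on the larger $X_i$-free class therefore transfers automatically to the $F$-free class (Builder is simply more constrained there). It thus suffices to furnish, for each $i\in\{1,2,3,4\}$, an explicit winning Painter strategy on the class of $X_i$-free graphs. I expect each such strategy to rest on a local color-balance invariant in the spirit of the ``good cycle'' invariant from the proof of Theorem~\ref{thm:K4minor}: forbidding $X_i$ as a subgraph restricts how Builder can close short cycles, so Painter can maintain a parity or balance condition on the edges incident to potential triangle-closures that forbids a monochromatic $C_3$ from ever being forced.

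For the necessity direction — that if $F$ is neither isomorphic to $X_5$ nor a subgraph of any $X_i$ with $i\leq 4$, then Builder wins on $F$-free graphs — the same monotonicity runs in the opposite direction: if $F'\subseteq F$, then $F'$-free graphs form a subclass of $F$-free graphs, so a winning Builder strategy on $F'$-free graphs is also winning on $F$-free graphs, because the graph Builder constructs lies in both classes. My plan is therefore to isolate a small finite list of \emph{minimal obstructions} — subgraph-minimal graphs $F'$ that are neither subgraphs of any $X_i$ ($i\leq 4$) nor isomorphic to $X_5$ — such that every permitted $F$ contains at least one $F'$ from the list, and to exhibit a tailored winning Builder strategy for each such $F'$. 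These strategies should follow the general template of Strategy~\ref{str:builder}: Builder first constructs a star or small tree-like gadget, applies the pigeonhole principle to extract three edges of the same color from a common vertex, and then closes the triangle on the three neighbors, choosing all vertices fresh enough that the constructed graph remains $F'$-free.

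The main obstacle is expected to lie in the necessity direction, specifically in the combinatorial case analysis. Identifying a complete and non-redundant list of minimal obstructions requires navigating the boundary between the ``Painter wins'' region and the unresolved case $X_5$: each candidate minimal obstruction must be distinguished from $X_5$, and for each, Builder's construction must be checked not to accidentally realize $F'$ (or any supergraph of $F'$ that would appear as a subgraph of the particular $F$ under consideration). Producing a small enough list that every non-$X_5$ ``bad'' $F$ visibly contains one of its members, and then designing a pigeonhole-and-closure strategy tailored to each list member while verifying the subgraph-avoidance condition throughout the game, is the combinatorial heart of the proof. The concluding Subsection~\ref{finaltouch} will be needed to stitch the minimal-obstruction analysis together and to confirm that the excluded case $X_5$ is truly the only gap between the two sides of the dichotomy.
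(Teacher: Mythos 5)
Your skeleton is exactly the paper's: the two monotonicity observations are Lemmas~\ref{subgraphlemma_1} and~\ref{subgraphlemma_2}, the sufficiency direction reduces to explicit Painter strategies for the four maximal graphs $X_1,\dots,X_4$, and the necessity direction reduces to a short list of minimal obstructions with a tailored Builder strategy for each. In the paper that list is $K_4$, $K_{1,5}$, and the graph $Y$ of Figure~\ref{fig-c3-2}, together with the global reduction of Lemma~\ref{lemma-c3} showing that any $F$ not contained in the single graph $X$ already loses to the basic pigeonhole-and-closure strategy; the final step is the finite check that every subgraph of $X$ without isolated vertices is either contained in some $X_i$ ($i\le 4$), contains one of the three obstructions, or is $X_5$. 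So the decomposition you propose is the right one and is the one the paper uses.

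The genuine gap is that the entire combinatorial content is deferred, and your guesses about what fills it are partly wrong in ways that matter. First, the Painter strategies are not parity or balance invariants in the spirit of the good-cycle argument of Theorem~\ref{thm:K4minor}; they are greedy rules of the form ``color red unless this creates a forbidden red configuration'' (Strategies~\ref{strategy-N}, \ref{strategy-M}, \ref{stgy-OP}), where for $X_1$ the forbidden configurations are a red $C_3$, $K_{1,3}$, or $C_4$, and for $X_2$ one must weaken this to forbidding only \emph{bad} copies of $K_{1,3}$ and $C_4$ --- a classification into good and bad copies (Figures~\ref{fig-c3M_1} and~\ref{fig-c3M_2}) that is the technical heart of the paper and that a balance invariant does not produce. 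Second, you locate the main difficulty in the necessity direction, but the Builder side is comparatively routine (multi-stage pigeonhole constructions as in Propositions~\ref{prop-K_4-free}, \ref{prop-K_15-free}, \ref{prop-A-free}); the hard case analysis lives in Proposition~\ref{prop-M-free}. Without the actual strategies and their verification --- in particular, proving that the greedy rule never gets stuck, which is where all the case analysis sits --- the proposal is a correct plan but not a proof.
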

%
%\begin{figure}[h]
%	\begin{center}
%		\includegraphics[scale=0.7]{thm-c3.pdf}
%	\end{center}
%  \caption{The graphs $X_1,X_2,X_3,X_4,X_5$.}
%  \label{fig-c3_2}
%\end{figure}

%The remaining of the paper is devoted to proving Theorem~\ref{thm-c3}. 
%We divide this section into three subsections. 
%Subsection~\ref{builder} and Subsection~\ref{painter} investigates the classes of graphs Builder and Painter wins, respectively. 
%Subsection~\ref{finaltouch} finishes the section.

\subsection{When does Builder win the online Ramsey game for $C_3$ on $F$-free graphs?}\label{builder}

In this subsection, we provide three different classes where Builder wins the online Ramsey game for $C_3$. 
%This means that Builder wins the online Ramsey game for $C_3$ on $\mathcal{C}$.
We start by proving Lemma~\ref{lemma-c3}, which shows that we only need to consider $F$ to be a subgraph of the graph $X$, which is in Figure~\ref{fig-lemma-c3}.
%By Lemma~\ref{lemma-c3}, it remains to check the cases for $F$-free graphs when $F$ is isomorphic to a subgraph of $X$.
Then we investigate the classes of (1) $K_4$-free graphs, (2) $K_{1,5}$-free graphs, and (3) $Y$-free graphs where $Y$ is the graph in Figure~\ref{fig-c3-2}.

%strategies that forces Painter to create a monochromatic $C_3$.

\begin{figure}[h]
	\begin{center}
		\includegraphics[scale=1.1]{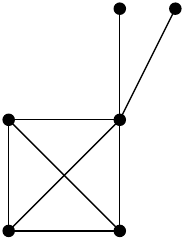}
	\end{center}
  \caption{The graph $X$.}
  \label{fig-lemma-c3}
\end{figure}

\begin{lemma}\label{lemma-c3}
Let $X$ be the graph in Figure~\ref{fig-lemma-c3}. 
%See Figure~\ref{fig-lemma-c3} for a definition of graph $X$.
If a graph $F$ is not isomorphic to a subgraph of $X$, then Builder wins the online Ramsey game for $C_3$ on $F$-free graphs. %, where $X$ is the graph in Figure~\ref{fig-lemma-c3}.
\end{lemma}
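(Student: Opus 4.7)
The plan is to exhibit a Builder strategy whose entire play is confined to a single fixed copy of the graph $X$ in Figure~\ref{fig-lemma-c3}, and that forces a monochromatic $C_3$ no matter how Painter plays. Once such a strategy is produced the lemma is immediate: if $F$ is not isomorphic to a subgraph of $X$, then $X$ itself is $F$-free, hence so is every intermediate graph during Builder's play, and the $F$-free constraint of the online Ramsey game is satisfied throughout.

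To build the strategy I would fix, on the unbounded set of isolated vertices provided at the start, a labelled copy of $V(X)$ and then introduce the edges of $X$ in a carefully chosen order. The natural opening move is the pigeonhole template from Strategy~\ref{str:builder}: first draw a high-degree piece of $X$ so that some three edges sharing a vertex must receive the same color, then draw the closing edges already present in $X$ so that any color Painter chooses produces a monochromatic triangle. Along the way Builder reads Painter's colors and selects the next edge from $X$; the shorthand ``Painter cannot color $e$'' introduced at the end of Section~1 should let the decision tree be described efficiently, by letting Builder look one edge ahead rather than track deep branches.

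The main obstacle is the branching case analysis: for every possible coloring by Painter I have to verify that the closing edges needed by Builder are already available as edges of $X$, so that Builder never has to leave $X$. I expect to collapse the branches considerably by exploiting red-blue symmetry and any automorphism of $X$, so that only a handful of essentially distinct color patterns on a small ``seed'' subgraph remain to be finished by hand. A secondary routine point is that Builder can always take fresh isolated vertices when placing a new vertex of the copy of $X$, so no earlier edges can accidentally enlarge the game graph beyond $X$. Once these cases are dispatched the strategy is fully specified and the lemma follows.
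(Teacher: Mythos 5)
Your plan is exactly the paper's proof: Builder plays Strategy~\ref{str:builder} (a $K_{1,5}$, then the triangle on three leaves whose edges to the center received the same color), the final graph is isomorphic to $X$, and since $F$ is not isomorphic to a subgraph of $X$, every intermediate graph is $F$-free. The case analysis you anticipate is trivial and involves no real branching---the three closing edges either include one of the majority color (a monochromatic triangle through the center) or are all the other color (a monochromatic triangle on the leaves)---so there is nothing further to dispatch.
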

\begin{proof}
Builder uses Strategy~\ref{str:builder}.
Assume $uv_1, uv_2, uv_3$ are colored red.
If Painter colors one of $v_1v_2, v_2v_3, v_3v_1$ red, then this creates a red $C_3$. 
% (say, $v_1v_2$ is red), then this creates a red $C_3$ with vertices $u$, $v_1$, and $v_{2}$.
%Therefore, Painter must color $v_1v_2$ blue.
%Similarly, Painter should color $v_2v_3$ and $v_1v_3$ blue.
Therefore Painter must color all of $v_1v_2, v_2v_3, v_3v_1$ blue, but then this creates a blue $C_3$ with vertices $v_1$, $v_2$, and $v_{3}$.

There is no $F$ as a subgraph at every step of the game since the resulting graph is $X$ and $F$ is not isomorphic to any of the subgraphs of $X$. %See Figure~\ref{fig-lemma-c3}.
Hence, Builder wins the online Ramsey game for $C_3$ on $F$-free graphs.
\end{proof}
%\begin{proof}
%We will present a winning strategy for Builder. 
%
%Builder draws $K_{1,5}$ with five pairwise disjoint edges $vv_1,vv_2,vv_3,vv_4$ and $vv_5$. Painter will color at least three of them with the same color, we may assume that $vv_1,vv_2,vv_3$ are colored red. 
%
%Then Builder draws $v_1v_2,v_2v_3$ which must be colored blue. If Builder draws $v_3v_1$, a monochromatic $C_3$ is created when the edge is colored either red or blue. 
%
%There is no $H$ as a subgraph in every step of the game since the resulting graph is $X$ and $H$ is not isomorphic to any of the subgraphs of $X$. See Figure~\ref{fig-lemma-c3}.
%
%Hence, Builder wins the online Ramsey game $(C_3,H$-free graphs$)$.
%\end{proof} 

The following Proposition~\ref{prop-K_4-free} is a special case of a result in~\cite{2004GrHaKi}, and a more general theorem is proved in \cite{2009KiKo}. 
For the sake of completeness, we include a proof of Proposition~\ref{prop-K_4-free}.

\begin{figure}
	\begin{center}
		\includegraphics[scale=0.7]{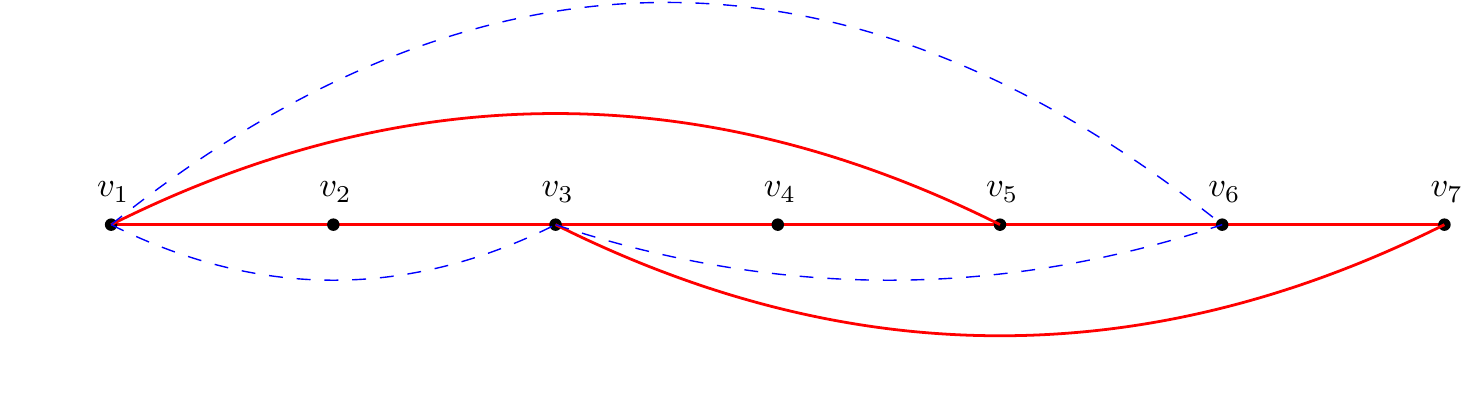}
	\end{center}
  \caption{A strategy for Builder to win the online Ramsey game for $C_3$ on $K_4$-free graphs.}
  \label{fig-c3k4}
\end{figure}

\begin{proposition}\label{prop-K_4-free}
Builder wins the online Ramsey game for $C_3$ on $K_4$-free graphs.
%Builder can force a monochromatic copy of $C_3$ on $K_4$-free graphs.
\end{proposition}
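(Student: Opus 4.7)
The plan is to exhibit a Builder strategy, depicted in Figure~\ref{fig-c3k4}, that constructs a $K_4$-free graph in which Painter is forced to complete a monochromatic $C_3$. The key conceptual obstacle is that Strategy~\ref{str:builder} does not transfer to this setting: once Builder has three monochromatic spokes $uv_1, uv_2, uv_3$ of a star at $u$, closing a triangle on $\{v_1, v_2, v_3\}$ immediately produces $K_4$ on $\{u, v_1, v_2, v_3\}$. The strategy must therefore arrange the eventual monochromatic $C_3$ so that its three vertices have no common neighbor in the host graph.

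The approach I would take uses two pigeonhole stages. In Stage~1, Builder draws a sufficiently large star $K_{1,n}$ at a vertex $u$; by the pigeonhole principle at least half of its edges receive a common color, say red, yielding a large red-neighborhood $A$ of $u$. In Stage~2, Builder introduces a \emph{fresh} vertex $w$, which has no prior neighbors and hence cannot lie in any $K_4$ at this moment, and draws all edges from $w$ to $A$. Pigeonhole again produces a large monochromatic substar at $w$. If the majority color at $w$ is \emph{blue}, Builder finishes by drawing one edge $a_1 a_2$ between two blue-neighbors of $w$: this edge is legal because $u$ and $w$ are non-adjacent, and Painter can color it neither red (the red triangle $u a_1 a_2$ appears) nor blue (the blue triangle $w a_1 a_2$ appears). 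If instead the substar at $w$ is red, Builder iterates by introducing further fresh vertices $w', w'', \ldots$ and repeating the construction each time on the common red-neighborhood.

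The main obstacle is to rule out a persistent red-majority scenario, in which Painter always plays so that the common red-neighborhood of the accumulated centers remains non-empty. I expect the argument in the paper to resolve this by one of two means: either a potential argument that caps the number of iterations by showing the shrinking common red-neighborhood must eventually force a blue majority (at which point the one-edge closing move above wins), or a direct exploitation of the accumulated pairwise non-adjacency of the centers $u, w, w', \ldots$, which is the crucial legality invariant, to close a monochromatic triangle in the outer layer with one additional legal edge. In either case, legality of every Builder move reduces to the bookkeeping observation that star centers are never made adjacent, so no two of them can together with a pair of leaves span a forbidden $K_4$; once this invariant is maintained, every closing edge of the form $a_i a_j$ is legal, and the forced monochromatic $C_3$ follows from the colorings imposed by the star centers.
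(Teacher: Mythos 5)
There is a genuine gap: your strategy is only completed in the blue-majority case, and the case where Painter persistently answers each new star with a red majority is exactly the hard case, which you explicitly leave to ``what I expect the paper does.'' The difficulty cannot be waved away by iterating fresh centers. If at some stage the surviving set $A$ consists of common red-neighbors of one or more pairwise non-adjacent centers, then a single closing edge $a_ia_j$ inside $A$ is merely forced blue (red would close a red triangle through a center); to win, Builder must assemble a \emph{blue triangle} among the leaves, i.e.\ three edges $a_1a_2,a_2a_3,a_3a_1$. But if all three leaves are neighbors of a common center $u$, the underlying graph then contains $K_4$ on $\{u,a_1,a_2,a_3\}$ regardless of colors, so this is illegal -- the same obstruction you correctly identified for Strategy~\ref{str:builder} reappears one layer down. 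Escaping it requires a genuinely new structural idea (e.g.\ three centers each red-adjacent to only \emph{two} of the three leaves, with the pairs overlapping cyclically, together with an argument that Painter can actually be forced into that red configuration), and none of this is supplied; the ``potential argument'' and the ``pairwise non-adjacency of centers'' you gesture at do not by themselves produce a legal closing move.

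For comparison, the paper's proof takes a different route that sidesteps the pigeonhole/star framework entirely. Builder first forces a monochromatic (say red) path $v_1v_2\cdots v_7$ while drawing only a forest, using the known fact that Builder wins the game for any forest target on the class of forests. Builder then adds the chords $v_1v_5$ and $v_3v_7$ and shows each must be red: if, say, $v_1v_5$ were blue, the edges $v_1v_3$ and $v_3v_5$ give a contradiction (a red triangle on consecutive path vertices or a blue triangle $v_1v_3v_5$). Finally Builder draws the triangle $v_1v_3,\ v_3v_6,\ v_6v_1$; each edge colored red closes a red triangle using the path edges and the chords ($v_1v_2v_3$, $v_3v_6v_7$, $v_1v_5v_6$ respectively), and all blue gives a blue triangle, while the final graph in Figure~\ref{fig-c3k4} has no $K_4$ because the three vertices $v_1,v_3,v_6$ of the closing triangle have no common neighbor. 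The path-plus-chords construction is what realizes, concretely, the ``no common neighbor'' requirement that your proposal identifies but does not achieve.
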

\begin{proof}
We will present a winning strategy for Builder. 

%Since Builder wins the online Ramsey game $(S,\mathcal D)$ where $S$ is a forest and $\mathcal D$ is the class of all forests by~\cite{2004GrHaKi}%can force a monochromatic copy of any forest on the class of forests by~\cite{2004GrHaKi}
%, we may assume that Builder can force Painter to create a monochromatic path of length six with seven vertices $v_1,v_2,\ldots,v_7$. 
Given a forest $S$, it is known that Builder wins the online Ramsey game for $S$ on the class of all forests by~\cite{2004GrHaKi}.
%Since Builder wins the online Ramsey game for $S$ on $\mathcal{D}$ where $S$ is a forest and $\mathcal D$ is the class of all forests by~\cite{2004GrHaKi}, 
Thus, we may assume that Builder has forced Painter to create a monochromatic path of length six while drawing a forest. 
We label the seven vertices on the path by $v_1,v_2,\ldots,v_7$ and suppose that these vertices on the path are in this order. 
Without loss of generality, assume the edges of the path are colored red. 
Note that there might be more edges incident with $v_i$ for $i\in\{1,\ldots ,7\}$, but since the whole graph is a forest, it is $K_4$-free.

Next, Builder draws $v_1v_5$ and $v_3v_7$. We claim that Painter must color both $v_1v_5$ and $v_3v_7$ red. Without loss of generality assume that $v_1v_5$ is colored blue. 
Now Builder draws both $v_1v_3$ and $v_3v_5$. Painter must color $v_1v_3$ blue, otherwise there is a red $C_3$ with three vertices $v_1,v_2,v_3$. Now Painter cannot color $v_3v_5$.
%Then Builder draws $v_1v_3$, which must be colored blue. Now if Builder draws $v_3v_5$, then a monochromatic $C_3$ is created when the edge is colored either red or blue. 
Therefore, both $v_1v_5$ and $v_3v_7$ must be colored red. 

%Next, Builder draws $v_3v_7$. If $v_3v_7$ is colored blue, then Builder draws $v_3v_5$, which must be colored blue. Then if Builder draws $v_5v_7$, then a monochromatic $C_3$ is created when the edge is colored either red or blue. Therefore, $v_3v_7$ must be colored red.
Finally, Builder draws three edges $v_1v_3$, $v_3v_6$, and $v_6v_1$. If Painter colors any of them red, then a red $C_3$ is created. 
Otherwise, Painter colors all of them blue, and this creates a blue $C_3$ with three vertices $v_1,v_3$, and $v_6$. See Figure~\ref{fig-c3k4}.

Four vertices of degree at least 3 appear only in the previous paragraph. 
It is easy to check that $K_4$ does not appear as a subgraph in this case, so $K_4$ does not appear as a subgraph at every step of the game.
Hence, Builder wins the online Ramsey game for $C_3$ on $K_4$-free graphs.
%Hence, Builder can force a monochromatic copy of $C_3$ on $K_4$-free graphs. 
\end{proof}

\begin{figure}
	\begin{center}
		\includegraphics[scale=0.7]{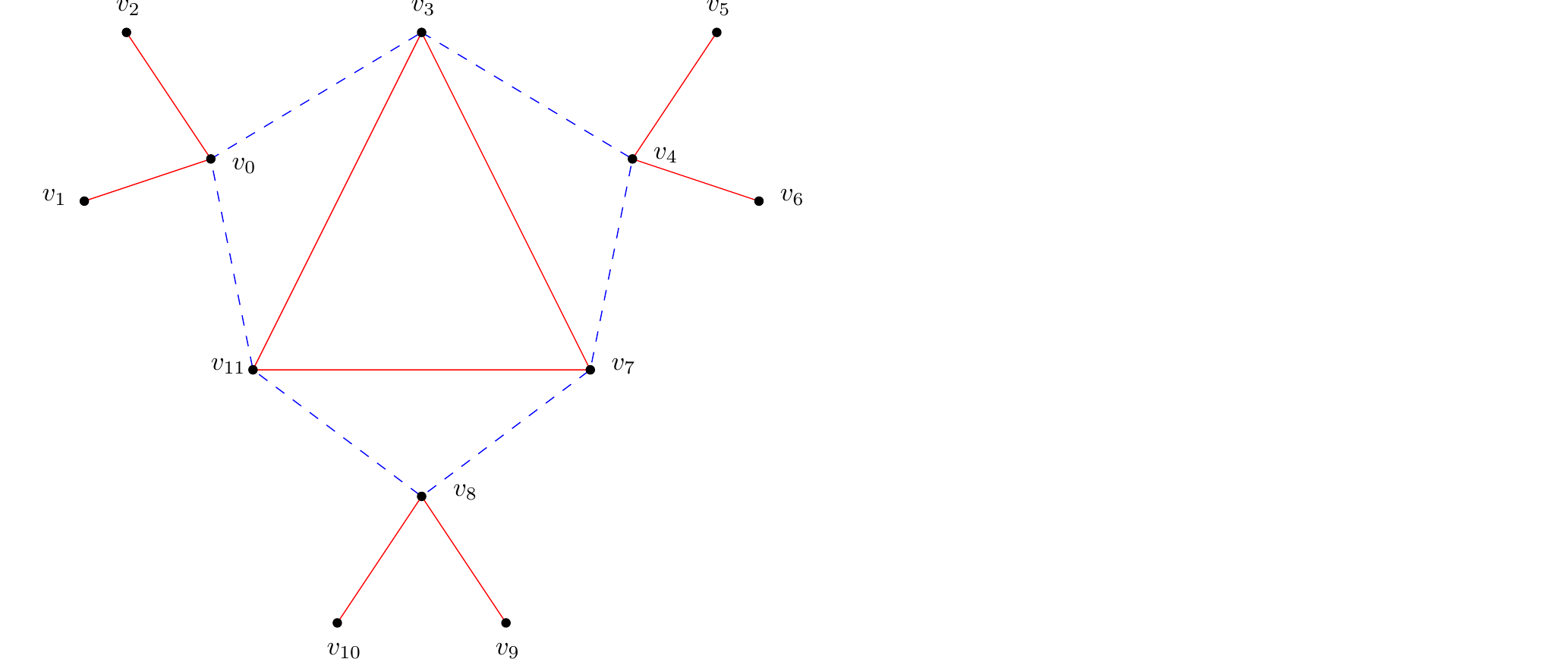}
	\end{center}
  \caption{A strategy for Builder to win the online Ramsey game for $C_3$ on $K_{1,5}$-free graphs.}%A graph for Proposition~\ref{prop-K_15-free}.}
  \label{fig-c3k15}
\end{figure}

The following proposition is implied by a result in~\cite{2011BuGrKiMiStWe} (see Proposition 4.2).
For completeness, we provide a proof here as well.

\begin{proposition}\label{prop-K_15-free}
Builder wins the online Ramsey game for $C_3$ on $K_{1,5}$-free graphs.
%Builder can force a monochromatic copy of $C_3$ on $K_{1, 5}$-free graphs.
\end{proposition}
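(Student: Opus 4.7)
My plan is to exhibit an explicit winning strategy for Builder that keeps the maximum degree of the constructed graph at most $4$ at every stage; this certifies that no $K_{1,5}$ ever appears as a subgraph. The strategy is the one shown in Figure~\ref{fig-c3k15}, and parallels the argument underlying Proposition~4.2 of Butterfield, Grauman, Kinnersley, Milans, Stocker, and West~\cite{2011BuGrKiMiStWe}.

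The first step would be to have Builder force a short monochromatic path. A natural way is to repeatedly draw a few edges incident to a common vertex of low current degree; by pigeonhole on a $2$-coloring of a small bundle of new edges, Painter is forced to create a monochromatic structure, which Builder then extends into a monochromatic path of the required length by further moves whose colors are pinned down by the no-triangle condition. Because Builder is limited to degree~$4$ at every vertex, the extension must jump to a new endpoint whenever the current endpoint is about to reach degree~$4$.

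Once a monochromatic (say red) path $v_1 v_2 \cdots v_k$ of suitable length is in place, the second step is to introduce chord edges of the form $v_i v_{i+2}$: each such chord would close a red $C_3$ with the path if colored red, so Painter is forced to color it blue. A few of these forced blue chords produce a predictable blue substructure on the vertices of the path, and one last chord then closes a monochromatic $C_3$ no matter which color Painter chooses, in the style of the endgame in the proof of Proposition~\ref{prop-K_4-free}.

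The main obstacle is the tight degree budget. The analogous $K_4$-free strategy in Proposition~\ref{prop-K_4-free} temporarily raises a central path vertex to degree~$5$, which is forbidden here. To get around this, I would use a somewhat longer base monochromatic path and distribute the chord edges across several distinct vertices, arranging that every vertex of the final configuration carries at most two path edges and at most two chord edges, hence degree at most~$4$. The technical heart of the argument is the step-by-step bookkeeping verifying that no degree exceeds $4$ at any move of the strategy; once this bound is checked, the forced-coloring reasoning that produces the monochromatic triangle is essentially the same as in the proof of Proposition~\ref{prop-K_4-free}.
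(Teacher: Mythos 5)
Your overall plan (certify $K_{1,5}$-freeness by keeping every degree at most $4$) is sound in spirit, and indeed this is how one would read the Butterfield et al.\ result that the on-line degree Ramsey number of $C_3$ is $4$. But as written the proposal is a plan, not a proof, and both of its load-bearing steps are missing. First, you never give a strategy that forces a monochromatic path of the needed length while respecting the degree bound: the forest-forcing result invoked in the proof of Proposition~\ref{prop-K_4-free} gives no control on degrees, and your suggestion of ``a bundle of new edges at a common vertex'' is capped at $4$ edges here, so pigeonhole only yields two like-colored edges; how you then \emph{force} the extension to stay monochromatic (Painter will happily alternate colors on pendant edges) is exactly the content that needs an argument. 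Second, the endgame is deferred to ``step-by-step bookkeeping'' that you do not carry out, and the naive adaptation genuinely fails: in the $K_4$-free endgame the vertex $v_3$ ends with degree $5$ (neighbors $v_2,v_4,v_7,v_1,v_6$), and each vertex of the final triangle must carry its two triangle edges plus two red edges realizing the red-$C_3$ threats, which already uses the entire degree budget of $4$ before counting the auxiliary threat edges (such as $v_1v_3,v_3v_5$) needed to force the long chords red in the first place. Redistributing all of this within degree $4$ is precisely the hard part of the proposition, and it is the part you have not done. (Also, Figure~\ref{fig-c3k15} does not depict a path-plus-chords configuration, so the appeal to it does not support your construction.)

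For comparison, the paper's proof avoids the path machinery entirely and makes the degree bound immediate. Builder draws five pairwise disjoint copies of $K_{1,3}$; a monochromatic claw is punished by completing it to a $K_4$, so each claw gets two edges of one color and one of the other, and by pigeonhole three claws agree (say two red, one blue). Joining the three leftover blue edges by three connector edges --- each forced blue, since a red connector sets up a red/blue $C_3$ trap inside one claw --- produces a blue $6$-cycle, and the three long diagonals of that hexagon then force a monochromatic triangle. Every vertex in this construction has degree at most $4$ by inspection. If you want to keep your degree-$4$ framing, the cleanest fix is to abandon the long-path route and either reproduce a claw-based construction of this type or carry out, in full, the explicit degree-$4$ strategy of Butterfield et al.
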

\begin{proof}
We will present a winning strategy for Builder. 

Builder draws five pairwise disjoint induced copies of $K_{1,3}$. We claim that Painter must not create a monochromatic copy of $K_{1,3}$. 
%Note that each of them must be colored by two red edges and one blue edge or two blue edges and one red edge. 
Otherwise, without loss of generality, assume that there is a red $K_{1,3}$. Now, Builder draws $K_4$ containing the red $K_{1,3}$ as a subgraph. If Painter colors any of the newly drawn edges red, then a red $C_3$ is created. Otherwise, Painter colors all of the newly drawn edges blue, and a blue $C_3$ is created. 

Therefore, since there is no monochromatic copy of $K_{1,3}$, we may assume that at least three of the five pairwise disjoint induced copies of $K_{1,3}$ contain two red edges and one blue edge; 
let these copies of $K_{1,3}$ be $S_0,S_1,S_2$ where $V(S_i)=\{v_{4i},v_{4i+1},v_{4i+2},v_{4i+3}\}$ and $E(S_i)=\{v_{4i}v_{4i+1},v_{4i}v_{4i+2},v_{4i}v_{4i+3}\}$ for $i\in\{0,1,2\}$ while $v_0v_3, v_4v_7$, and $v_8v_{11}$ are blue, and all other edges in $E(S_0)\cup E(S_1)\cup E(S_2)$ are red. 

Next, Builder draws $v_3v_4, v_7v_8$, and $v_{11}v_0$. We claim that Painter must color all these edges blue. Suppose without loss of generality that Painter colors $v_3v_4$ red. Then Builder draws $v_3v_5, v_5v_6$, and $v_6v_3$. If Painter colors any of them red, then a red $C_3$ is created. If Painter colors all of them blue, then this creates a blue $C_3$ with vertices $v_3,v_5$, and $v_6$. 

Therefore we may assume that Painter colors $v_3v_4, v_7v_8$, and $v_{11}v_0$ blue. Finally, Builder draws $v_3v_7, v_7v_{11}$, and $v_{11}v_3$. If Painter colors any of them blue, then a blue $C_3$ is created. If Painter colors all of them red, then this creates a red $C_3$ with vertices $v_3, v_7$, and $v_{11}$. See Figure~\ref{fig-c3k15}.

It is easy to check that $K_{1,5}$ does not appear as a subgraph at every step of the game.
Hence, Builder wins the online Ramsey game for $C_3$ on $K_{1,5}$-free graphs.
%Hence, Builder can force a monochromatic copy of $C_3$ on $K_{1, 3}$-free graphs. 
\end{proof}

\begin{figure}[h]
	\begin{center}
		\includegraphics[scale=0.7]{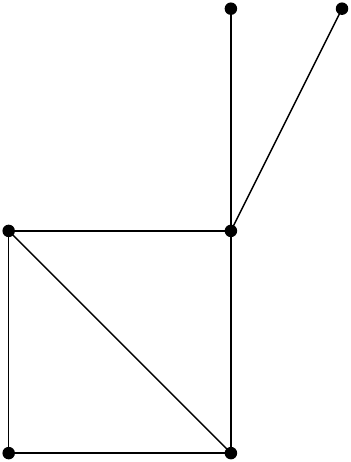}
	\end{center}
  \caption{The graph $Y$.}
  \label{fig-c3-2}
\end{figure}

\begin{lemma}\label{lemma-A-free}
Let $Y$ be the graph in Figure~\ref{fig-c3-2}. 
While playing the online Ramsey game for $C_3$ on $Y$-free graphs, 
Builder can force Painter to create either a monochromatic copy of $C_3$ or a blue edge $xy$ with $\deg(x)=1$ and $\deg(y)\leq2$.
\end{lemma}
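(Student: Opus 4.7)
The plan is to give Builder a strategy in the same spirit as the proof of Proposition~\ref{prop-K_15-free}, but truncated so that the graph Builder draws remains $Y$-free at every step. Either Builder completes the triangle-forcing argument and obtains a monochromatic $C_3$, or at some point the first move that would complete a copy of $Y$ is one that Builder refrains from drawing; in the latter case, the structure already visible on the board should expose a blue edge $xy$ with $\deg(x)=1$ and $\deg(y)\le 2$.

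First, I would have Builder draw a large collection of pairwise vertex-disjoint induced copies of $K_{1,3}$ on fresh isolated vertices, so that in the current graph each leaf has degree exactly $1$ and each center has degree exactly $3$. A pigeonhole argument on Painter's colorings yields many copies with the same color profile (monochromatic red, monochromatic blue, or two-plus-one). If some $K_{1,3}$ is monochromatic red, Builder tries the closing procedure of Proposition~\ref{prop-K_15-free}: add pairwise edges among the leaves to force either a red $C_3$ or a blue $C_3$, which succeeds unless one of the probe edges would complete a copy of $Y$. If some $K_{1,3}$ contains a blue pendant edge, that blue edge is already incident to a leaf of degree $1$ and a center of degree $3$, and Builder adds one further carefully chosen edge (from a disjoint new $K_{1,3}$, not touching the existing center) to either force a monochromatic triangle through the extended structure or leave behind a blue edge whose non-leaf endpoint has degree at most $2$.

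The main obstacle, and the bulk of the technical work, is the case analysis around the $Y$-free constraint. Because $Y$ is a fixed small graph, only a bounded number of local completions could create $Y$, and in each such blocked completion Builder must identify an earlier blue edge on the board that already witnesses the conclusion of the lemma. The delicate point is ensuring that Builder's probe moves do not themselves raise the degree of a candidate endpoint $y$ past $2$; this is why the initial copies of $K_{1,3}$ must be drawn on pairwise disjoint vertex sets, and why each subsequent probing edge has to be confined to a carefully chosen small subset of leaves so that the eventual low-degree witness is preserved.
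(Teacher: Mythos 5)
There is a genuine gap, and it sits exactly at the heart of the lemma. The conclusion requires a blue edge $xy$ with $\deg(x)=1$ and $\deg(y)\le 2$, and since drawing further edges only ever increases degrees, such a witness can only arise at the moment Painter colors blue an edge whose one endpoint is a brand-new vertex and whose other endpoint had degree at most $1$ beforehand. Your construction, built from vertex-disjoint copies of $K_{1,3}$, cannot produce this: a blue edge inside a completed star has its non-leaf endpoint at degree $3$, and the later probe edges (leaf-to-leaf, leaf-to-center) join vertices that already have degree at least $1$ on each side, so they end with both endpoints of degree at least $2$. You notice the degree-$3$ problem yourself, but the proposed remedy --- ``adds one further carefully chosen edge \ldots to leave behind a blue edge whose non-leaf endpoint has degree at most $2$'' --- cannot work, because no added edge can lower the center's degree and no mechanism is given that forces Painter to place a \emph{new} blue edge with the required low-degree endpoints. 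Separately, the part you defer as ``the bulk of the technical work'' (verifying $Y$-freeness of every intermediate graph and exhibiting the claimed earlier blue witness in each blocked completion) is precisely the content of the lemma, and there is good reason to doubt it goes through: the $Y$-free case is treated separately in the paper exactly because the graph built by the $K_{1,5}$-free strategy is not $Y$-free, so ``truncating'' that strategy is not a safe starting point.

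The paper's proof avoids all of this by exploiting the degree condition directly: Builder grows a path one edge at a time, so each new edge has a fresh endpoint of degree $1$ and an old endpoint of degree at most $2$, and any blue response is immediately the desired witness. If Painter keeps everything red, Builder holds a red path $x_1x_2x_3x_4x_5$ and attaches a single new vertex $x_6$ to $x_2$ and $x_4$; both of these edges must be blue (else a short red/blue triangle threat closes the game), and then $x_2x_4$ cannot be colored. If you want to salvage your approach, the essential missing ingredient is a phase in which every edge Builder draws touches a vertex of current degree at most $1$ on one side and a new vertex on the other --- which is exactly a path-growing phase.
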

\begin{proof}
This can be proven by letting Builder draw an edge and extend it to a path of length $4$. 
At any moment, if Painter colors any of these edges blue, then that creates the blue edge we seek. 
%with one end of degree $1$ and the other end of degree at most $2$. 
Otherwise, we may assume Painter produced a red path of length $4$. %if Painter colors those edges of the path in red, then Builder repeats this until Builder draws a path of length 4. 
Let $P$ be such a red path with vertices $x_1,x_2,x_3,x_4$, and $x_5$ in this order on $P$.

Now, Builder draws two edges $x_2x_6$ and $x_4x_6$ with a new vertex $x_6$. 
We claim that Painter must color both $x_2x_6$ and $x_4x_6$ with the color blue. 
Without loss of generality, suppose Painter colors $x_2x_6$ red. 
Now, Builder draws $x_1x_3,x_3x_6$, and $x_6x_1$. 
If Painter colors any of these edges red, then there is a red $C_3$. 
If Painter colors all of these edges blue, then this creates a blue $C_3$. Therefore, Painter must color $x_2x_6$ and $x_4x_6$ blue. 

Finally, Builder draws $x_2x_4$. Whenever Painter colors $x_2x_4$ red or blue, this creates a monochromatic copy of $C_3$.% See Figure~\ref{fig-c3-2}.

It is easy to check that $Y$ does not appear as a subgraph at every step of the game.
Hence, Builder can force Painter to create either a monochromatic copy of $C_3$ or a blue edge $xy$ with $\deg(x)=1$, $\deg(y)\leq2$, while playing the online Ramsey game for $C_3$ on $Y$-free graphs.
\end{proof}

\begin{proposition}\label{prop-A-free}
Let $Y$ be the graph in Figure~\ref{fig-c3-2}. 
Builder wins the online Ramsey game for $C_3$ on $Y$-free graphs.
%Builder can force a monochromatic copy of $C_3$ on $A$-free graphs while graph $A$ is as defined in Figure~\ref{fig-c3-2}.
\end{proposition}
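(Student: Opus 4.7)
The plan is to iterate Lemma~\ref{lemma-A-free} on pairwise disjoint batches of fresh vertices and then finish with a pigeonhole argument in the spirit of Strategy~\ref{str:builder}. Concretely, Builder first applies Lemma~\ref{lemma-A-free} five times, each time restricting the game to a fresh vertex set disjoint from everything built so far. Every such application either already produces a monochromatic $C_3$ and Builder wins immediately, or yields a blue edge $x_iy_i$ with $\deg(x_i)=1$ and $\deg(y_i)\le 2$. So we may assume Builder has accumulated five blue edges $x_1y_1,\ldots,x_5y_5$ sitting inside pairwise vertex-disjoint subgraphs.

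Next, Builder picks a fresh vertex $u$ and draws the edges $ux_1,\ldots,ux_5$. By the pigeonhole principle, three of these edges receive the same colour; after relabelling, say these are $ux_1,ux_2,ux_3$. If their common colour is red, Builder draws the triangle $x_1x_2,x_2x_3,x_3x_1$: a red edge among them completes a red $C_3$ through $u$, whereas colouring all three blue produces a blue $C_3$ on $\{x_1,x_2,x_3\}$. If the common colour is blue, Builder first draws $uy_1,uy_2,uy_3$. A blue $uy_i$ closes the blue triangle $ux_iy_i$ together with the already blue edges $ux_i$ and $x_iy_i$. Otherwise all three of $uy_1,uy_2,uy_3$ are red, and Builder draws the triangle $y_1y_2,y_2y_3,y_3y_1$: any red edge there completes a red $C_3$ through $u$, and colouring all three blue gives a blue $C_3$ on $\{y_1,y_2,y_3\}$. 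In every branch a monochromatic $C_3$ is forced.

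The main obstacle, as throughout this subsection, is verifying $Y$-freeness at every stage of play. For the first phase this is handed to us by Lemma~\ref{lemma-A-free} applied inside each batch, together with the batches being vertex-disjoint. For the second phase the crucial point is that Lemma~\ref{lemma-A-free} leaves each $x_i$ as a leaf and each $y_i$ with degree at most two, and the apex $u$ is brand new; hence any potential copy of $Y$ arising after the final moves must embed into the very restricted configuration consisting of the apex $u$, the leaves $x_i$, the low-degree neighbours $y_i$, and a single triangle on either $\{x_1,x_2,x_3\}$ or $\{y_1,y_2,y_3\}$. The verification reduces to a finite case check that tracks where each vertex of the graph $Y$ in Figure~\ref{fig-c3-2} could possibly be mapped, using the disjointness of the batches and the degree bounds to rule out every embedding. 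This bookkeeping, rather than the game-theoretic content, is where I expect the real work to lie.
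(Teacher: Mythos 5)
Your game-theoretic skeleton is sound in isolation, but the construction it requires is not legal on $Y$-free graphs, and this is a structural obstruction rather than deferred bookkeeping. In the branch where $ux_1,ux_2,ux_3$ share a colour and you draw the triangle $x_1x_2,\,x_2x_3,\,x_3x_1$, the subgraph on $\{u,x_1,\ldots,x_5\}$ consists of the five spokes $ux_1,\ldots,ux_5$ together with a triangle on three of the leaves; this is precisely the graph $X$ of Figure~\ref{fig-lemma-c3}. The blue branch is no better: once $uy_1,uy_2,uy_3$ are drawn and you add the triangle on $\{y_1,y_2,y_3\}$, the vertices $u,y_1,y_2,y_3$ together with two further neighbours of $u$ (say $x_4,x_5$) again carry a copy of $X$. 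Now $Y$ must be a subgraph of $X$: otherwise Lemma~\ref{lemma-c3} would already give Builder a win on $Y$-free graphs and Proposition~\ref{prop-A-free} would contribute nothing to Theorem~\ref{thm-c3}; indeed $Y$ is a proper subgraph of $X$ containing neither $K_4$ nor $K_{1,5}$, since otherwise the proposition would follow from Proposition~\ref{prop-K_4-free} or Proposition~\ref{prop-K_15-free} via Lemma~\ref{subgraphlemma_2}. Hence both of your terminal configurations contain $Y$, and Builder is simply not permitted to reach them.

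The difficulty is intrinsic to the ``star plus pigeonhole'' template of Strategy~\ref{str:builder}: to guarantee three like-coloured spokes you need an apex of degree five, and placing a triangle on three of its neighbours then reproduces $X\supseteq Y$. The paper's proof is organized precisely to dodge this. It first extracts four like-coloured pairwise disjoint edges $v_iw_i$, attaches an apex $v$ of degree only four --- so that three like-coloured spokes already force a win inside a $K_4$ carrying at most one pendant edge per vertex, which the paper checks is $Y$-free, and otherwise exactly two spokes of each colour survive --- and then routes the two final monochromatic-triangle threats through the pair $w_1,w_2$ and the low-degree endpoints $x,y$ supplied by Lemma~\ref{lemma-A-free}. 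A correct proof needs a comparably careful setup; the five-fold repetition of Lemma~\ref{lemma-A-free} followed by a $K_{1,5}$ apex cannot be repaired by a finite case check.
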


\begin{proof}
We will present a winning strategy for Builder. 

%Without loss of generality, Painter colors four edges red when Builder draws seven pairwise disjoint edges. 
Builder draws seven pairwise disjoint edges. 
By the pigeonhole principle, Painter colors at least four edges with the same color.
Without loss of generality, assume $v_1w_1$, $v_2w_2$, $v_3w_3$, and $v_4w_4$ are red edges.
%Say those edges $e_1, e_2, e_3, e_4$ where $e_i=v_iw_i$. 

Next, Builder draws the four edges $vv_i$ for $i\in\{1,2,3,4\}$ with a new vertex $v$. 
We claim that Painter must color two of them red and the other two blue. 
%Suppose Painter colors three of them red and suppose $vv_1,vv_2$, and $vv_3$ are such edges. 
Suppose Painter colors $vv_1$, $vv_2$, and $vv_3$ red.
Now Builder draws $v_1v_2, v_2v_3$, and $v_3v_1$. 
If Painter colors any of them red, then a red $C_3$ is created. 
If Painter colors all of these edges blue, then this creates a blue $C_3$ with vertices $v_1,v_2$, and $v_3$. 
Therefore, we may assume that $vv_1,vv_2$ are red and $vv_3,vv_4$ are blue. 

Next, Builder draws $w_1w_2$. 
Suppose Painter colors $w_1w_2$ blue. 
Now, Builder draws $vw_1$ and $vw_2$. 
If Painter colors any of these edges red, then a red $C_3$ is created. 
If Painter colors both $vw_1$ and $vw_2$ blue, then a blue $C_3$ with vertices $v,w_1$, and $w_2$ is created. 
Therefore we may assume that Painter colors $w_1w_2$ red. 
%when the edge is colored either red or blue. Therefore we may assume that $w_1w_2$ is red. 

Now, Builder forces Painter to create a blue edge $xy$ with $\deg(x)=1$ and $\deg(y)\leq2$, which is possible by Lemma~\ref{lemma-A-free}.
%This can be made by drawing a long path from one vertex and drawing from the vertex while making it connected. If there is a red path of arbitrarily long length, then let $P$ be the red path with vertices $x_1,x_2,x_3,x_4,x_5$ with $deg(X_2)=1$ and $deg(x_i)=2$ for $i\in\{2,3,4,5\}$. Then Builder can win by drawing two edges $x_2x_6, x_4x_6$ with new vertex $x_6$ since they must be colored in blue and by drawing $x_2x_4$, since monochromatic $C_3$ is created when the edge is colored either red or blue. 
Next, Builder draws $xw_1$ and $xw_2$. 
We claim that Painter must color these edges blue. 
Without loss of generality, suppose $xw_1$ is colored red. 
Then Builder draws two more edges $xv_1$ and $v_1w_2$. 
If Painter colors any of $xw_2,xv_1$, and $v_1w_2$ red, then there is a red $C_3$. 
If Painter colors all of them blue, then this creates a blue $C_3$ with vertices $x,v_1$, and $w_2$. 
%Builder draws $xv_1$ and $v_1w_$, which also must be blue. If Builder draws $v_1w_2$, then monochromatic $C_3$ is created when the edge is colored either red or blue. 
Therefore, Painter must color $xw_1$ and $xw_2$ blue. 

Finally, Builder draws $yw_1$ and $yw_2$. If Painter colors any of them blue, then there is a blue $C_3$. If Painter colors all of them red, then this creates a red $C_3$ with vertices $y,w_1$ and $w_2$. % which must be colored red. If Builder draws $yw_2$, then monochromatic $C_3$ is created when the edge is colored either red or blue. 
See Figure~\ref{fig-c3A}.

It is easy to check that $Y$ never appears as a subgraph at every step of the game.
Hence, Builder wins the online Ramsey game for $C_3$ on $Y$-free graphs.
%Hence, Builder can force a monochromatic copy of $C_3$ on $A$-free graphs.
\end{proof}

\begin{figure}
	\begin{center}
		\includegraphics[scale=0.7]{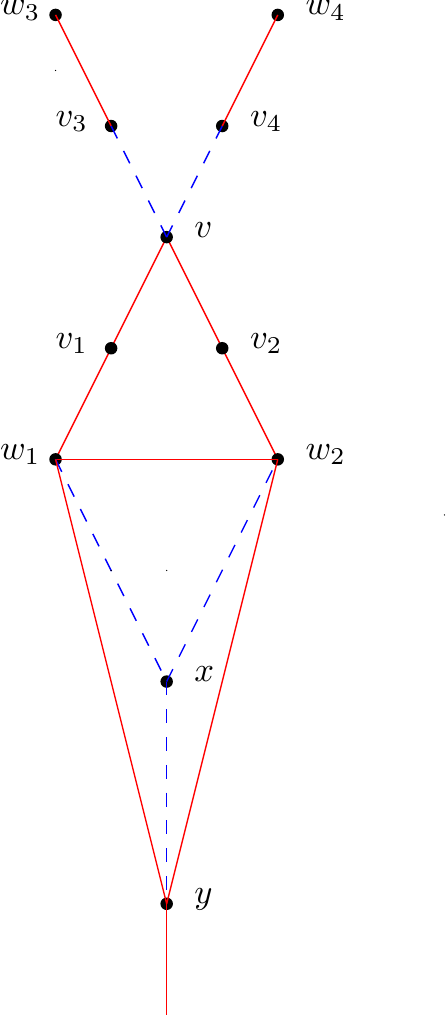}
	\end{center}
  \caption{A strategy for Builder to win the online Ramsey game for $C_3$ on $Y$-free graphs.}%A graph for Proposition~\ref{prop-A-free}. }
  \label{fig-c3A}
\end{figure}

\subsection{When does Painter win the online Ramsey game for $C_3$ on $F$-free graphs?}\label{painter}

%With this Lemma~\ref{lemma-c3}, it is enough to check for cases of $H$-free graphs when $H$ is a subgraph of $X$.

In this section, we will prove that Painter wins the online Ramsey game for $C_3$ on $F$-free graphs for various $F$. %Builder cannot force a monochromatic copy of $C_3$ on $H$-free graphs for some fixed $H$. 
Recall that by Lemma~\ref{lemma-c3}, we only need to consider $F$ to be a subgraph of the graph $X$, which is in Figure~\ref{fig-lemma-c3}.
For a fixed $F$, it is sufficient to provide a strategy for Painter so that a monochromatic $C_3$ does not appear forever on $F$-free graphs. 
We will provide three different winning strategies for Painter for three different $F$.
%Before starting the proof of Theorem~\ref{thm-c3}, we introduce the following lemma.

\begin{strategy}\label{strategy-N}
Painter colors each new edge red, unless doing so creates a red $K_{1, 3}$, a red $C_3$, or a red $C_4$, in which case the new edge is colored blue. 
\end{strategy}

\begin{proposition}\label{prop-N-free}
Let $X_1$ be the graph in Figure~\ref{fig-c3}.
Painter wins the online Ramsey game for $C_3$ on $X_1$-free graphs.
%Builder cannot force a monochromatic copy of $C_3$ on $X_1$-free graphs. 
\end{proposition}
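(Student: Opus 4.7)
The plan is to verify that Strategy~\ref{strategy-N} never produces a monochromatic $C_3$ whenever the host graph Builder constructs is $X_1$-free. Since Painter never colors an edge red when doing so would create a red $C_3$, no red $C_3$ can ever appear, so it suffices to show that no blue $C_3$ can appear either.

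I would argue by contradiction: suppose at some step a blue $C_3$ on vertices $u,v,w$ first appears. Each of the edges $uv$, $vw$, $wu$ is blue, and by Strategy~\ref{strategy-N} each was colored blue only because, at the moment it was drawn, coloring it red would have produced a red $K_{1,3}$, a red $C_3$, or a red $C_4$. This pins down, for each blue edge, a \emph{witness} red subgraph attached to its endpoints: two red edges sharing an endpoint with one end of the blue edge (the $K_{1,3}$ case), a red path of length two with the same endpoints as the blue edge (the $C_3$ case), or a red path of length three with the same endpoints as the blue edge (the $C_4$ case).

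Next I would run a case analysis on the triple of witness types assigned to $(uv, vw, wu)$, collapsing significantly by the $S_3$-symmetry permuting $u,v,w$. In each case the blue triangle together with the forced red witness edges is to be shown to contain a copy of $X_1$, contradicting $X_1$-freeness. To rule out degenerate vertex identifications between witness vertices and $\{u,v,w\}$, I would invoke the fact that at the moment each blue edge was drawn, none of the forbidden red substructures actually existed for that edge and no red $C_3$ existed anywhere in the graph; these constraints sharply restrict how the witnesses can overlap and guarantee that the embedding of $X_1$ found is genuine.

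The main obstacle I anticipate is the case-analysis bookkeeping, specifically handling degenerate configurations in which auxiliary witness vertices coincide with each other or with $\{u,v,w\}$. In such degenerate cases I expect to derive either an immediate contradiction (for instance, a red $C_3$ that would have prevented an earlier red coloring) or a $X_1$-embedding by a slightly modified choice of vertices. The cleanness of the argument relies on the structure of $X_1$ fitting every witness combination; the most delicate sub-cases are those where two of the blue edges come with the $C_4$-witness and the red paths of length three may share internal vertices, where a careful intersection analysis will be required.
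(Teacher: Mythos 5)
Your proposal is correct and follows essentially the same route as the paper's proof: verify that Strategy~\ref{strategy-N} maintains the invariant that no red $K_{1,3}$, red $C_3$, or red $C_4$ ever appears, and at the first would-be blue $C_3$ extract the forced red witness for each blue edge and show by case analysis that the witnesses either yield a copy of $X_1$ or force a forbidden red substructure to have existed earlier. The only cosmetic difference is that the paper organizes the cases around the last-drawn edge of the blue triangle (and its two previously blue companions) rather than treating the three blue edges symmetrically, but the witness types, the degenerate-identification issues, and the two mechanisms for closing each case are exactly as you describe.
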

\begin{proof}
Painter will use Strategy~\ref{strategy-N}.
We claim that Painter can always color the new edge $e=xy$ with Strategy~\ref{strategy-N}.
Let $G$ be the new graph when Builder draws $e$.  
We will use induction on the number of edges. 
The base case is trivial. 

By the induction hypothesis, we may assume that there is no red $K_{1, 3}$, no red $C_3$, no red $C_4$, and no blue $C_3$ in $G-e$.
The strategy fails when coloring $e$ blue results in a blue $C_3$ and coloring $e$ red results in a red $K_{1, 3}$, a red $C_4$, or a red $C_3$. 
Let $x, y, z$ be the vertices of the blue $C_3$ when $e$ is colored blue.
%so that $xz$ and $zy$ are blue. 
We will prove that if the strategy fails, then $G$ has $X_1$ as a subgraph, which is a contradiction, and thus the strategy does not fail. 
We will divide the cases according to which red subgraph appears when Painter colors $e$ red.%It is enough to show for the following three cases, when red $C_3$, red $K_{1,3}$ and red $C_4$ is created by coloring $e$ red. 

\paragraph{Case 1}
Assume a red $C_3$ is created when Painter colors $e$ red, and let $w$ be the third vertex of this red $C_3$. 
Since Painter colored neither $xz$ nor $zy$ red, coloring each of $xz$ and $yz$ red must have created a red $C_4$, a red $C_3$, or a red $K_{1, 3}$ in $G-e$.
We will show that a red $C_3$ or a red $C_4$ cannot be created by coloring either $xz$ or $yz$ red. 
Without loss of generality, let us consider $xz$. 
%Now, we proceed on two subcases that coloring $xz$ red creates one of a red $C_4, C_3$. Then finish this case by using two subcases. 

If coloring $xz$ red resulted in a red $C_4$ with vertices $x, s, t, z$ in cyclic order, then $t\not=y$ and $s\not=y$, since in $G-e$, the edge $yz$ is blue and $e$ does not exist.
We also know that $t\not=w$, since otherwise $G-e$ has a red $K_{1, 3}$ as a subgraph, which is a contradiction to the induction hypothesis. 
 If $s=w$, then it must be that $t=y$ in order for $G-e$ to not have a red $K_{1,3}$, but this contradicts that $t\not=y$.
This implies that $s, t\not\in\{x, y, z, w\}$, which means $G$ has $X_1$ as a subgraph, which is a contradiction.

If coloring $xz$ red resulted in a red $C_3$ %centered at $z$ 
with vertices $x, z, u$, then $u\neq w$, since otherwise $G- e$ has a red $K_{1,3}$ as a subgraph, which contradicts the induction hypothesis. 
This implies that $u\not\in\{x, y, z, w\}$. 
Now, $y$ and $z$ cannot have neighbors outside of $\{u, x, y, z, w\}$ since that would create a copy of $X_1$ in $G$.
There is no red edge between $u$ and $w$ because that would create a red $C_3$ in $G- e$. 
Since either a red $yu$ or a red $zw$ would create a red $K_{1, 3}$, neither $y$ nor $z$ can have more incident red edges, which means $yz$ could have been colored red, which is a contradiction. 

This boils down to the case where both $xz$ and $zy$ were colored blue because coloring either one red would create a red $K_{1, 3}$. 
Since $zw$ cannot be a red edge (creates a red $K_{1, 3}$ in $G- e$) and $z$ cannot have two neighbors outside of $\{x, y, w\}$ (creates a copy of $X_1$ in $G$), each of $x$ and $y$ have a neighbor $x'$ and $y'$, respectively, such that $xx'$ and $yy'$ are red.
It cannot be that $x'=y'$, since this creates a red $C_4$ with vertices $x, w, y, x'$ in $G- e$. 
If $x'\neq y'$, then this creates a copy of $X_1$ in $G$. 
In either case, we obtain a contradiction.

\paragraph{Case 2}
Assume a red $K_{1, 3}$ is created when Painter colors $e$ red, and without loss of generality let $x, y, u, v$ be the vertices of the red $K_{1, 3}$ so that $xy, ux, xv$ are red edges. 
Now, $z$ and $y$ cannot have neighbors outside of $\{x, y, z, u, v\}$ since that would create a copy of $X_1$.
%This implies that $zu$ and $zv$ cannot both be red edges, since that would create a red $C_4$ with vertices $z, u, x, v$. Similarly, $yu$ and $yv$ cannot both be red edges.
This implies that each of $z$ and $y$ cannot have two red edges incident to it, since that would create a red $C_4$, with vertices $z, u, x, v$.
Also, $uv$ cannot be a red edge since $G- e$ would have a red $C_3$, with vertices $u, v, x$.
Since $zy$ was not colored with red, coloring $zy$ with red must create a red $K_{1, 3}$, a red $C_3$, or a red $C_4$ in $G- e$. 
The only possible case is when coloring $zy$ with red creates a red $C_3$, which implies that either $u$ or $v$ is a vertex of this red $C_3$, which implies the existence of a red $K_{1, 3}$ in $G- e$, which is a contradiction.

\paragraph{Case 3}
Assume a red $C_4$ is created when Painter colors $e$ red, and let $xx', x'y', y'y$ be the red edges of this red $C_4$ other than $e$.
%Now $x$ and $y$ cannot have a neighbor $v$ where $xv$ and $yv$, respectively, are red since either it would create a copy of $X_1$ or a red $K_{1, 3}$ in $G$.
%Now $x$ and $y$ cannot have a neighbor $v$ where $xv$ and $yv$ is red, respectively, since either it would create a copy of $X_1$ or a red $C_3$ in $G$.
Now, neither $x$ nor $y$ can have a neighbor outside of $\{x, y, x', y', z\}$ since this would create a copy of $X_1$ in $G$.
Also, $x'$ and $y'$ cannot have a neighbor $v\not\in\{x, x', y', y\}$ where $x'v$ and $y'v$ is red, respectively, since this would create a red copy of $K_{1, 3}$ in $G- e$.
%Also, $z$ cannot have a neighbor $v\in\{x', y'\}$ where $zv$ is red since it would mean that there is a red $K_{1, 3}$ in $G- e$. 
Since Painter colored neither $xz$ nor $yz$ red, coloring each of $xz$ and $yz$ red must create a red $K_{1, 3}$, a red $C_4$, or a red $C_3$. 
The only possible case is when there is a red $K_{1, 3}$ centered at $z$ when Painter colors $xz$ or $yz$ red. 
In particular, $z$ must have two neighbors $z', z''$ outside of $\{x, x', y, y'\}$ where $zz'$ and $zz''$ are red edges. 
Yet, this creates a copy of $X_1$, which is a contradiction.

Therefore, Strategy~\ref{strategy-N} works and thus Painter wins the online Ramsey game for $C_3$ on $X_1$-free graphs.
\end{proof}

Before starting the proof for the case of $X_2$-free graphs, we define some \emph {``good"} subgraphs of a graph. 
We say a subgraph $H$ of $G$ that is isomorphic to either $K_{1, 3}$ or $C_4$ is \emph {good} if $H$ is red, and there exists a subgraph $I$ of $G$ where $H$ is a subgraph of $I$ in such a way that $I$ is isomorphic to one of the graphs in Figures~\ref{fig-c3M_1} and \ref{fig-c3M_2}, where the thick edges correspond to the edges of $H$; 
moreover, for $i\in\{1, \ldots, 5\}$, we say \emph{$H$ is good by property $A_i$ (or $B_i$)} to mean that the corresponding $I$ is isomorphic to the graph labeled $A_i$ (or $B_i$) in Figures~\ref{fig-c3M_1} and \ref{fig-c3M_2}.
We also say $H$ is {\it good} if $H$ is good because of multiple properties. 
For example, if $H$ satisfies the property $A_1$, then $H$ is isomorphic to $K_{1,3}$ and the vertex of degree $3$ of $G[V(H)]$ has degree at least $5$ in $G$. 
We say that a red subgraph $H$ of $G$ that is isomorphic to either $K_{1,3}$ or $C_4$ is \emph {bad} if it is not good. 
Note that if a subgraph $H$ is bad, then all of its edges are red. 

The idea is that we want to forbid $K_{1, 3}$ and $C_4$ in the graph as much as we can, but we allow copies of $K_{1, 3}$ and $C_4$ if we can guarantee that there is some structure we can utilize. 

\begin{figure}[h]
	\begin{center}
		\includegraphics[scale=0.7]{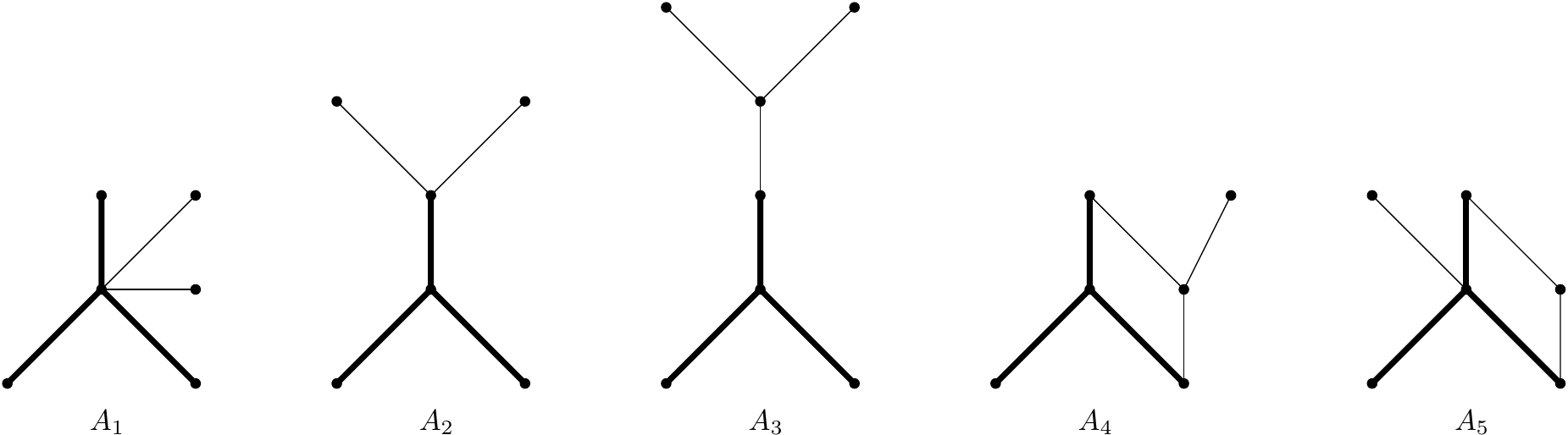}
	\end{center}
  \caption{The five good $K_{1,3}$'s.}
  \label{fig-c3M_1}
\end{figure}

\begin{figure}[h]
	\begin{center}
		\includegraphics[scale=0.6]{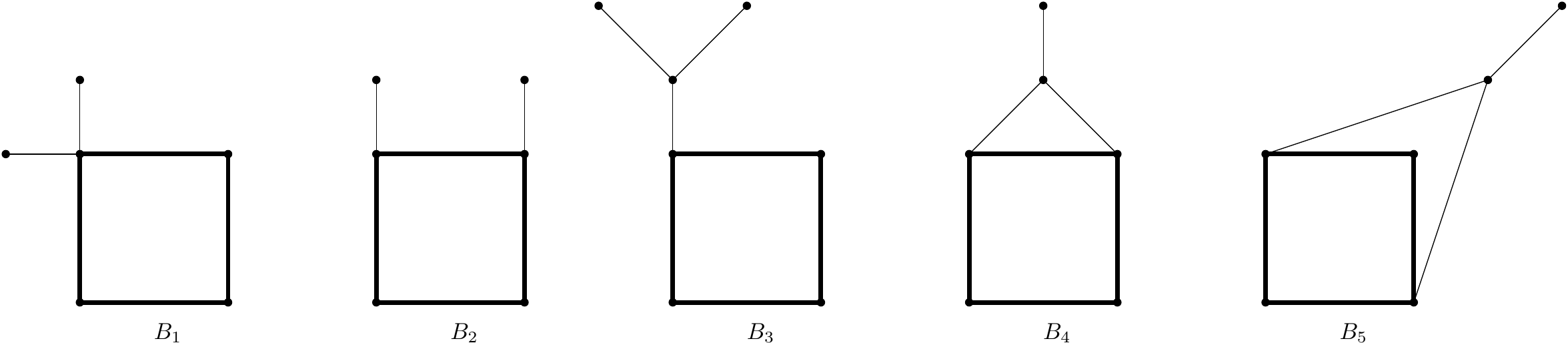}
	\end{center}
  \caption{The five good $C_4$'s.}
  \label{fig-c3M_2}
\end{figure}

%% updated this figure? think lines have a different meaning in figures 7 and 8

%\begin{lemma}\label{jisu-lemma}
%Let $X_2$ be the graph in Figure~\ref{fig-c3}.
%For every good $K_{1,3}$ with vertices $v, v_1, v_2$, and $v_3$ while $v$ is the center of $K_{1,3}$, Builder cannot draw all three edges $v_1v_2, v_2v_3$, and $v_3v_1$ in the online Ramsey game for $C_3$ on $X_2$-free graphs.
%\end{lemma}

\begin{lemma}\label{jisu-lemma}
Let $X_2$ be the graph in Figure~\ref{fig-c3}.
Let $G$ be a graph that has a good $K_{1,3}$ with vertices $v, v_1, v_2, v_3$ where $v$ is the vertex of degree $3$. 
If $v_1v_2$, $v_2v_3$, and $v_3v_1$ are edges in $G$, then $G$ contains $X_2$ as a subgraph.
\end{lemma}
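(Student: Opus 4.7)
The plan is to perform a case analysis based on which of the properties $A_1, \ldots, A_5$ of Figure~\ref{fig-c3M_1} certifies that the given $K_{1,3}$ is good. First, I observe that the hypothesis immediately yields that $G[\{v, v_1, v_2, v_3\}]$ is a copy of $K_4$: the three edges $vv_1, vv_2, vv_3$ come from the $K_{1,3}$, and the edges $v_1v_2, v_2v_3, v_3v_1$ are given. Thus in every case, the subgraph under consideration contains a $K_4$ on $\{v, v_1, v_2, v_3\}$, and the goodness hypothesis contributes some additional edges or vertices attached to this $K_4$.

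In each of the five cases, I would explicitly read off the extra edges or vertices provided by property $A_i$ from Figure~\ref{fig-c3M_1} and then exhibit an embedding of $X_2$ into the union of the $K_4$ and this additional $A_i$-structure. For example, in the $A_1$ case the center $v$ has degree at least $5$ in $G$, so $v$ has at least two neighbors outside $\{v_1, v_2, v_3\}$; these, together with the $K_4$, should realize $X_2$. Cases $A_2$ through $A_5$ proceed analogously, each providing enough extra edges incident to $v$ or to one of the $v_i$'s, or extra paths between them, to complete a copy of $X_2$ when overlaid on the $K_4$.

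The main obstacle is bookkeeping. One must check, case by case, that the specific vertices and edges supplied by the $A_i$-certificate, together with the $K_4$ on $\{v, v_1, v_2, v_3\}$, support an injective homomorphism from $X_2$ into $G$: no two vertices of the purported copy of $X_2$ may coincide, and every edge of $X_2$ must be present. The verification is not conceptually deep but requires careful pattern matching against Figure~\ref{fig-c3}; I expect each of the five subcases to be a short explicit identification, after which the lemma follows by exhausting them.
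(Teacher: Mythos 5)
Your proposal is correct and matches the paper's proof: the paper also argues by a case analysis over the five goodness certificates $A_1,\ldots,A_5$, overlaying each on the $K_4$ formed by $\{v,v_1,v_2,v_3\}$ and verifying (via Figure~\ref{jisu-lemma-fig}) that a copy of $X_2$ appears in each case. The level of detail you defer to "bookkeeping" is exactly what the paper defers to its figure.
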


\begin{proof}
See Figure~\ref{jisu-lemma-fig}. It is easy to check that $G$ has $X_2$ as a subgraph in each case.
\end{proof}
\begin{figure}[h]
	\begin{center}
		\includegraphics[scale=0.7]{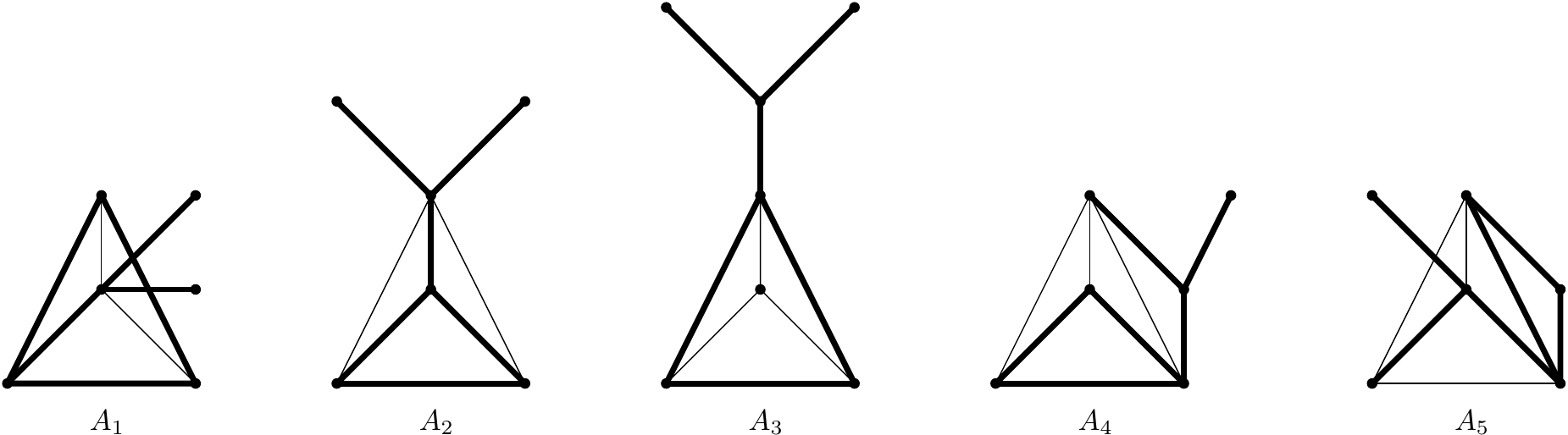}
	\end{center}
  \caption{Observation for the proof of Lemma~\ref{jisu-lemma}.}
  \label{jisu-lemma-fig}
\end{figure}

\begin{strategy}\label{strategy-M}
Painter colors each new edge red, unless doing so creates a red $C_3$, a bad $K_{1, 3}$, or a bad $C_4$, in which case the new edge is colored blue. 
\end{strategy}

\begin{proposition}\label{prop-M-free}
Let $X_2$ be the graph in Figure~\ref{fig-c3}.
Painter wins the online Ramsey game for $C_3$ on $X_2$-free graphs.
%Builder cannot force a monochromatic copy of $C_3$ on $X_2$-free graphs. 
\end{proposition}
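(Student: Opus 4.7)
The plan is to proceed by induction on the number of edges in $G$, with the invariant that after each of Painter's moves the current coloring has no red $C_3$, no blue $C_3$, no bad $K_{1,3}$, and no bad $C_4$. The base case (empty graph) is trivial, and since the red choice of Strategy~\ref{strategy-M} explicitly avoids the three forbidden red structures, it suffices to show that Painter is never forced to color an edge blue in a way that creates a blue $C_3$. Suppose then that $G-e$ satisfies the invariant, Builder draws $e = xy$, and Strategy~\ref{strategy-M} fails on $e$: coloring $e$ blue would create a blue $C_3$, so there is a vertex $z$ with $xz, yz$ blue in $G-e$, and coloring $e$ red would create a red $C_3$, a bad $K_{1,3}$, or a bad $C_4$. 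The goal is to derive a contradiction by showing that $G$ must contain $X_2$ as a subgraph.

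The principal tool is Lemma~\ref{jisu-lemma}: whenever $G$ has a good red $K_{1,3}$ whose three leaves span a triangle, $G$ contains $X_2$. The analysis splits into three cases according to the red structure forced on $e$ (a red $C_3$ through some fourth vertex $w$, a bad $K_{1,3}$ centered at $x$ or $y$, or a bad $C_4$ of the form $x\text{-}a\text{-}b\text{-}y$). In each case we additionally exploit that both $xz$ and $yz$ were colored blue: each such blue edge was forced blue because, at the moment of its addition, coloring it red would have completed a red $C_3$, a bad $K_{1,3}$, or a bad $C_4$, and since edges are never deleted the supporting red subgraph still lives in $G$, although a structure that was bad when created may by now have become good. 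Combining the red structure around $e$ with these additional red structures anchored at $xz$ and $yz$, we locate either an explicit copy of $X_2$ or a good red $K_{1,3}$ whose three leaves span a triangle, at which point Lemma~\ref{jisu-lemma} closes the case. Whenever a once-bad structure has become good, the defining properties $A_1, \ldots, A_5, B_1, \ldots, B_5$ supply the extra vertices and edges needed to assemble $X_2$.

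The main obstacle is the bookkeeping around the bad-versus-good distinction, which is absent in the proof of Proposition~\ref{prop-N-free}: because the strategy forbids only \emph{bad} copies of $K_{1,3}$ and $C_4$, we cannot simply conclude that no red $K_{1,3}$ or $C_4$ lives in $G-e$. We must instead either feed the extra structure of the good copies into Lemma~\ref{jisu-lemma} or combine them with the red and blue neighborhoods of $x$, $y$, and $z$, navigating a detailed case analysis that nonetheless follows the same general template as the proof of Proposition~\ref{prop-N-free}: chase colored neighborhoods around the would-be blue triangle $\{x,y,z\}$ until a copy of $X_2$ is revealed.
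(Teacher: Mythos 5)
Your framework is exactly the paper's: Strategy~\ref{strategy-M}, induction on the number of edges with the invariant that $G-e$ has no red $C_3$, no bad $K_{1,3}$, and no bad $C_4$, a three-way case split on the red structure forced at $e$, and Lemma~\ref{jisu-lemma} as the closing device. But the proposal stops at the point where the proof actually begins. Every case ends with the assertion that one ``locates either an explicit copy of $X_2$ or a good red $K_{1,3}$ whose three leaves span a triangle,'' and no case is verified. That assertion is precisely what is in doubt: the entire difficulty of this proposition, as opposed to Proposition~\ref{prop-N-free}, is that the allowed good copies of $K_{1,3}$ and $C_4$ might let Builder smuggle in a configuration where Painter is cornered without $X_2$ ever appearing. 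Declaring that the bookkeeping ``closes the case'' is not a proof that it does.

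Concretely, three ingredients of the actual argument are absent from your outline. First, the observation that under Strategy~\ref{strategy-M} every blue edge acquires at least two incident red edges; this is what kills Case~3 (the bad $C_4$ case) and drives Case~2, and without it those cases do not close. Second, the temporal case split on whether $xz$ or $yz$ was drawn later: in Subcase~1-2 the contradiction comes from showing that the \emph{later} of the two forced structures was already good at the moment Painter was asked to color that edge (good by property $A_2$, $B_3$, or $B_5$ depending on the configuration), so Painter would in fact have colored it red. You gesture at ``at the moment of its addition'' but never run this argument, and it is not symmetric or automatic. Third, the degenerate identifications ($u=w$; $\lvert\{u_1,u_2\}\cap\{v_1,v_2\}\rvert\in\{0,1,2\}$ when both $xz$ and $yz$ are blocked by bad $K_{1,3}$'s) each require a separate resolution, the last of which hinges on the observation that a red $C_4$ in a component on at most five vertices is always bad. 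Until these subcases are written out and each one is shown to produce $X_2$ or a legal red move, the proposal is a plan rather than a proof.
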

\begin{proof}
Painter will use Strategy~\ref{strategy-M}.
We claim that Painter can always color the new edge $e=xy$ with Strategy~\ref{strategy-M}. 
Let $G$ be the new graph when Builder draws $e$. 
We will use induction on the number of edges. 
The base case is trivial.

By the induction hypothesis, we may assume that none of a red $C_3$, a bad $K_{1, 3}$, or a bad $C_4$ exists in $G- e$. 
The strategy fails when coloring $e$ blue results in a blue $C_3$ and coloring $e$ red results in a red $C_3$, a bad $K_{1, 3}$, or a bad $C_4$. 
Let $z$ be the vertex of the blue $C_3$ so that $xz$ and $zy$ are blue. 
Note that every blue edge has at least two red edges incident with it in $G$ while Painter uses Strategy~\ref{strategy-M}. 
We will prove that if the strategy fails, then $G$ has $X_2$ as a subgraph, which is a contradiction, and thus the strategy does not fail. 
We will divide the cases according to which red graph appears when Painter colors $e$ red.
%It is enough to show for the following three cases, when red $C_3$, bad $K_{1,3}$ and bad $C_4$ is created by coloring $e$ red. 

\paragraph{Case 1}
Assume a red $C_3$ is created when Painter colors $e$ red, and let $w$ be the third vertex of this red $C_3$. 
Since Painter did not color $xz$ and $yz$ red, coloring any of $xz$ and $yz$ red must have created a red $C_3$, a bad $C_4$, or a bad $K_{1, 3}$. 
By Lemma~\ref{jisu-lemma}, we may assume that there is no red edge between $z$ and $w$.  
Now, we consider three subcases where coloring $xz$ red creates one of a red $C_3$, a bad $K_{1, 3}$, or a bad $C_4$. 

%Since Painter did not color both $xz$ and $zy$ red, coloring each of $xz$ and $yz$ red must have created a red $K_{1, 3}, C_4$, or $C_3$ in $G-e$.

\paragraph{Subcase 1-1} 
Assume that coloring $xz$ red creates a red $C_3$ with vertices $x,z$, and $u$. 
Since we assumed that there is no red edge between $z$ and $w$, we know that $u\neq w$. 
By Lemma~\ref{jisu-lemma}, we may assume that there is no red edge between $y$ and $u$. 
Moreover, $y$ and $z$ cannot have neighbors outside of $\{x, y, z, u, w\}$, since $G$ cannot have $X_2$ as a subgraph. 
%otherwise $G$ has $X_2$ as a subgraph. 
However, this is a contradiction because Painter must have colored $yz$ red (instead of blue) since this does not create any of a bad $K_{1,3}$, a bad $C_4$, or a red $C_3$.
%, but $yz$ is colored blue. 
Note that although there can be an edge $uw$ in $G- e$, Painter could not color $uw$  red since this creates a red $C_3$ in $G- e$. 
%First, assume that coloring $xz$ red creates a red $C_3$. Let $u$ be the third vertex of this red $C_3$ which is not $x, z$. By previous assumption, $u\neq w$. Then by Lemma~\ref{jisu-lemma}, we may also assume that there is no red edge between $y$ and $u$. Note that $y,z$ cannot have neighbors outside of $\{x, y, z, u, w\}$, since otherwise, there is $X_2$ as a subgraph in $G$. However, this is a contradiction because coloring $yz$ red does not create any of bad $K_{1,3}, C_4$ or red $C_3$. Note that $uw$ cannot be red since this creates red $C_3$. 
 
\paragraph{Subcase 1-2}
Assume that coloring $xz$ red creates a bad $C_4$, say $R$, with vertices $x,u,v$, and $z$ in cyclic order. 
Since there is no red edge between $z$ and $w$, we know that $v\neq w$. 

Suppose $u\neq w$.
Note that $u, v$, and $z$ cannot have neighbors outside of $\{x,y,z,u,v,w\}$ and $E(G)$ has none of $vy, vx, vw$, and $uz$, otherwise $G$ has $X_2$ as a subgraph. Therefore, there was no red $C_3$ when Painter colored $yz$ red. 

If there was a bad $C_4$ when Painter colored $yz$ red, then the only possible case is when the bad $C_4$ consists of vertices $u, v, y$, and $z$ since $u, v$, and $z$ has no neighbor outside of $\{x, y, z, u, v, w\}$. Note that there is a red $K_{1,3}$ with vertices $u, v, x$, and $y$. If $\{x, y\}$ has no neighbors outside of $\{x, y, z, u, v\}$, then this red $K_{1,3}$ must be bad, which is a contradiction. Therefore, whenever $xz$ is drawn later than $yz$ or $yz$ is drawn later than $xz$, the later one must be colored red since the corresponding red $C_4$ is actually good. 

The only remaining reason that Painter colored $yz$ blue is that there are two red edges incident with $y$ so that coloring $yz$ red creates a bad $K_{1,3}$, say $S$. There are two cases: when there is a red edge between $y$ and $u$ so that $E(S)=\{yz,yu,yw\}$ and when there is no red edge between $y$ and $u$ but there is a red edge $ys$ with a new vertex $s$ so that  $E(S)=\{yz, ys, yw\}$. %It is easy to check that for both cases, one of $R$ and $S$ is good when Painter colors $xz$ and $zy$ red, which is a contradiction. Note that $xw$ may not be drawn at that step of the game.

\begin{itemize}
\item When there is a red edge between $y$ and $u$ so that $E(S)=\{yz,yu,yw\}$.

\begin{itemize} 
\item
Suppose Builder drew $xz$ later than $yz$. 
Then $R$ is good by property $B_5$, which is a contradiction. 
\item
Suppose Builder drew $yz$ later than $xz$.
Then $S$ is good by property $A_2$, which is a contradiction.  
\end{itemize}
\item When there is no red edge between $y$ and $u$ but there is a red edge $ys$ with a new vertex $s$ so that  $E(S)=\{yz, ys, yw\}$.
\begin{itemize}
\item Suppose Builder drew $xz$ later than $yz$. Then $R$ is good by property $B_3$, which is a contradiction. 
\item Suppose Builder drew $yz$ later than $xz$. Then $S$ is good by property $A_2$, which is a contradiction.  
\end{itemize}

Note that for both cases, $xw$ may not be drawn at each step of the game.
\end{itemize}

Now suppose $u=w$. It is easy to check that $v, w$, and $z$ cannot have neighbors outside of $\{v, w, x, y, z\}$, since otherwise $G$ has $X_2$ as a subgraph. Since a red $K_{1,3}$ with vertices $x, y, w, v$ must be good, $\{x, y\}$ must have at least one neighbor outside of $\{v, w, x, y, z\}$. Note that this is only true for those steps of the game in which the red $K_{1,3}$ has already been drawn. 

\begin{itemize}
\item
Suppose that $yz$ is drawn later than $xz$. 

\begin{itemize}
\item Coloring $yz$ red cannot create a red $C_3$ since $z$ cannot have neighbors outside of $\{v, w, x, y, z\}$.
\item Coloring $yz$ red cannot create a bad $C_4$ since the only possible red $C_4$ is of vertices $v, w, y$, and $z$. Since there is a red $K_{1,3}$, $\{x, y\}$ must have at least one neighbor outside of $\{v, w, x, y, z\}$ and this implies that the red $C_4$ is good. 
\item Coloring $yz$ red cannot create a bad $K_{1,3}$ since $z$ cannot have neighbors outside of $\{v, w, x, y, z\}$. Even if there are two red edges $ys$ and $yt$ for vertices $s$ and $t$ (one of $s$ and $t$ may be equal to $w$, but not to $v$ or $x$), the red $K_{1,3}$ with vertices $s, t, y$, and $z$ is good by property $A_2$. 

Note that there are no red edges between $z$ and $w$, between $v$ and $y$, between $v$ and $x$, or between $x$ and $y$.  
\end{itemize}

\item
Suppose that $xz$ is drawn later than $yz$. 
Now, there are two cases: when coloring $yz$ red created a bad $C_4$ or when coloring $yz$ red created a bad $K_{1,3}$. Note that coloring $yz$ red cannot create a red $C_3$. 

\begin{itemize}
\item If coloring $yz$ red would have created a bad $C_4$, then the vertices of this $C_4$ must be $v, w, y$, and $z$, since $v, w$, and $z$ cannot have neighbors outside of $\{v, w, x, y, z\}$ and $wz$ is not a red edge. Hence, $yw$ must have been drawn before $yz$. 
%\item When coloring $yz$ red created a bad $C_4$, it means that $yw$ is drawn earlier than $yz$, since $v, w,$ and $z$ cannot have neighbors outside of $\{v, w, x, y, z\}$. Now, $xz$ must be colored red since whenever $x$ or $y$ has a neighbor outside of $\{v, w, x, y, z\}$, $R$ is good, which is a contradiction.
\item If coloring $yz$ red would have created a bad $K_{1,3}$, then there must have been two vertices $s, t$ such that $ys$ and $yt$ are red, and these are drawn earlier than $yz$. Whenever $w\in\{s,t\}$ or not, $R$ is good, which is a contradiction.
%\item When coloring $yz$ red created a bad $K_{1,3}$, it means that there are two vertices $s, t$ such that $ys$ and $yt$ are red, and these are drawn earlier than $yz$. Whenever $w\in\{s,t\}$ or not, $R$ is good, which is a contradiction.

Note that there are no red edges between $v$ and $x$ or between $v$ and $y$. %Painter cannot color $xv$ or $yv$ red even though Builder drew them.   
\end{itemize}
\end{itemize}

\paragraph{Subcase 1-3} 
We may assume that coloring $xz$ red creates a bad $K_{1,3}$, say $T_1$.
%with leaves $v_1, v_2$ and $v_3$. 
If $z$ is the center of $T_1$, then since there is no red edge between $z$ and $w$, $G$ has $X_2$ as a subgraph, which is a contradiction. 
%the three edges of $T_1$ are $zx, zz_1$ and $zz_2$ for some new vertices $z_1$ and $z_2$, there is $X_2$ in $G$ as a subgraph. 
Therefore, we may assume that $x$ is a center of $T_1$, and $xz, xu_1$, and $xu_2$ are the three edges of $T_1$ with new vertices $u_1$ and $u_2$. 
By symmetry, we may assume that coloring $yz$ red creates a bad $K_{1,3}, say T_2$, with the center $y$.  
We may also assume that $yz, yv_1$, and $yv_2$ are the three edges of $T_2$ with new vertices $v_1$ and $v_2$. 
Note that $w$ is not necessarily distinct from $u_1, u_2, v_1, v_2$. 

\begin{itemize}
\item If $\abs{\{u_1,u_2\}\cap\{v_1,v_2\}}=0$, then it is easy to check that Painter can color one of $xz$ and $yz$ red since $T_1$ or $T_2$ must be good by property $A_3$, which is a contradiction.   

%\item If $\abs{\{u_1,u_2\}\cap\{v_1,v_2\}}=1$, then we may assume that it is $w$ and let $u_2=v_2=w$. 
\item If $\abs{\{u_1,u_2\}\cap\{v_1,v_2\}}=1$, then it is easy to check that Painter can color one of $xz$ and $yz$ red since $T_1$ or $T_2$ must be good by property $A_4$, which is a contradiction. 

\item 
%If $\abs{\{u_1,u_2\}\cap\{v_1,v_2\}}=2$, then we may assume that one of them is $w$. Let $u_1=v_1$ by $u$ and $u_2=v_2$ by $w$. 
If $\abs{\{u_1,u_2\}\cap\{v_1,v_2\}}=2$, then let $w_1=u_1=v_1$ and $w_2=u_2=v_2$. 
We may assume that $yz$ is drawn later than $xz$, by symmetry. 
Then right before Builder draws $yz$, each of $\{x,y,z,w_1,w_2\}$ cannot have neighbors outside of $\{x,y,z,w_1,w_2\}$, since otherwise $T_2$ becomes good when Painter colors $yz$ red. 
However, this is a contradiction since the red $C_4$ with vertices $x, w_1, y, w_2$, is bad in $G- e$.
This is because a red $C_4$ in a component of at most five vertices is always bad.  
\end{itemize}

\paragraph{Case 2} 
Assume a bad $K_{1, 3}$, say $S$, is created when Painter colors $e$ red, and without loss of generality let $x, y, u$, and $v$ be the vertices of $S$ so that $ux, xv$ are red edges. 
Now, $y$ cannot have neighbors outside of $\{x, z, u, v\}$ in $G$ since otherwise $S$ is good by property $A_2$ when $e$ is colored red in $G$. 
If there is a red edge between $u$ and $y$ and between $v$ and $y$, then this case is covered by Case $1$. 
Therefore, we may assume that there is no red edge between $u$ and $y$ and between $v$ and $y$ in $G$. 
Since the blue edge $yz$ must have two incident red edges in $G$, we may assume that the two red edges are incident with $z$, say $zs, zt$. 
We can check that $\{s,t\}=\{u,v\}$, since otherwise $S$ is good by property $A_4$ when $e$ is colored red in $G$. 
Since $G- e$ has a red $C_4$ with vertices $x, u, z, v$, say $R$, by the induction hypothesis, $R$ must be good. 
This implies that the component containing $R$ must have at least six vertices, thus, one of $x,y,z,u$, and $v$ has a neighbor outside of $\{x,y,z,u,v\}$. 
However, this implies that $S$ is good when $e$ is colored red, which is a contradiction. 

\paragraph{Case 3} 
Assume a bad $C_4$, say $R$, is created when Painter colors $e$ red, and let $xu, uv, vy$ be the red edges of $R$. 
Now each of $\{x,y,z,u,v\}$ cannot have neighbors outside of $\{x,y,z,u,v\}$, since otherwise $R$ is good in $G$. 
This also implies that there is no red $K_{1,3}$ in this component since $K_{1,3}$ in a component of at most five vertices must be bad. 
Then the blue edge $zx$ has no two red edges incident with it in $G$, which is a contradiction. 

Therefore, Strategy~\ref{strategy-M} works, and thus Painter wins the online Ramsey game for $C_3$ on $X_2$-free graphs.
\end{proof}

We present a winning strategy for Painter that can be used for the following proposition covering two cases.

\begin{strategy}\label{stgy-OP}
%When Builder draws an edge $e$, if a blue $C_3$ is made when Painter colors $e$ blue or there is no red edge incident to $e$, then Painter colors $e$ red. Otherwise, Painter colors $e$ blue.
When Builder draws an edge $e$, if a blue $C_3$ is made when Painter colors $e$ blue or there is no red edge incident to $e$, then Painter colors $e$ red. Otherwise, Painter colors $e$ blue.
\end{strategy}
%\begin{proposition}
%$C_3$ is avoidable on $K_{1,4}$-free graphs. 
%\end{proposition}
%\begin{proof}
%greedy without monochromatic $K_3, K_{1,3}$. 
%\end{proof}
\begin{proposition}\label{prop-O-free}
Let $X_3$ and $X_4$ be the graphs in Figure~\ref{fig-c3}.
Painter wins the online Ramsey game for $C_3$ on $X_3$-free graphs and
%Builder cannot force a monochromatic copy of $C_3$ on $X_3$-free graphs. 
Painter wins the online Ramsey game for $C_3$ on $X_4$-free graphs.
%Moreover, Builder cannot force a monochromatic copy of $C_3$ on $X_4$-free graphs.
\end{proposition}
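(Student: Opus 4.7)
The plan is to have Painter apply Strategy~\ref{stgy-OP} and show by induction on the number of rounds that no monochromatic $C_3$ is ever created, as long as the host graph remains $X_3$-free (respectively $X_4$-free). The first observation is that the strategy rules out any blue $C_3$ outright: whenever coloring a new edge blue would close a blue triangle, the strategy demands red instead. Hence the only way Painter can lose is via a red $C_3$, and this can only happen at the current edge $e=xy$ if, simultaneously, there is a vertex $z$ with $xz$ and $yz$ blue (forcing Painter to color $e$ red to avoid a blue $C_3$) and a vertex $w$ with $xw$ and $wy$ red (so that coloring $e$ red completes a red $C_3$).

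Next I would invoke the second clause of Strategy~\ref{stgy-OP}: every blue edge had at least one incident red edge at the moment it was drawn. Applying this to $xz$ and to $yz$ locates additional red edges at $x$, $y$, or $z$. I would then split into cases according to which endpoint of each blue edge carries the incident red edge, and according to whether the incident red neighbor coincides with the already-present vertex $w$ or is a new vertex. Some subcases also depend on the chronological order in which $xz$, $yz$, $xw$, and $wy$ appeared, because Strategy~\ref{stgy-OP} only constrains what is present at the time an edge is drawn; careful bookkeeping of this order is needed, since a red edge added after a blue edge does not help justify that blue edge's existence.

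Finally, I would check each resulting local configuration against the graphs $X_3$ and $X_4$ depicted in Figure~\ref{fig-c3}. In every case the vertices and edges forced by the analysis should span a subgraph containing $X_3$ in the first proof and $X_4$ in the second, contradicting the assumed $F$-freeness of the host graph and completing the induction step. The two proofs run in parallel on the same template, differing only in which small witness subgraph each subcase produces. The main obstacle is making the case split genuinely exhaustive: one must verify that the deadlock configuration around the five named vertices $x$, $y$, $z$, $w$, together with the extra red neighbors promised by the strategy, always embeds the target $X_i$, without any gap that would allow Painter's strategy to be defeated while keeping the graph $X_i$-free.
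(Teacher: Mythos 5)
Your reduction of the failure event is correct: since Painter never colors an edge blue when doing so would close a blue triangle, the only losing move is a forced red edge $e=xy$ completing a red $C_3$ through some vertex $w$, and the forcing must come from a blue $C_3$ through some vertex $z$ (the ``no incident red edge'' clause cannot be the reason, because $xw$ is red and incident to $e$). The gap is in the next step. The only consequence of Strategy~\ref{stgy-OP} you propose to exploit is that every blue edge had an incident red edge when it was drawn. Applied to $xz$ and $yz$, this is vacuous in the subcase---which your own case split allows---where those incident red edges are $xw$ and $yw$ themselves: the forced configuration is then just the four vertices $x,y,z,w$ carrying $K_4$ minus $zw$. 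Every simple graph on four vertices embeds in $K_4$, and $X_3$ and $X_4$ each have five vertices (the paper's proof exhibits a full $K_4$ on four of the named vertices without yet reaching a contradiction, so neither $X_3$ nor $X_4$ can be a subgraph of $K_4$). Hence this configuration contains neither target graph and your induction step does not close.

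The missing ingredient is what the strategy tells you about \emph{red} edges. Order the two red edges of the red triangle, say $xw$ was drawn after $yw$. When $xw$ was drawn it already had the incident red edge $yw$, so Painter colored it red only because coloring it blue would have completed a blue triangle: there is a vertex $v$ with $xv$ and $wv$ blue and drawn earlier than $xw$. If $v\neq z$, this supplies a fifth vertex and a configuration containing both $X_3$ and $X_4$. If $v=z$, then $zw$ is blue, $\{x,y,z,w\}$ spans a $K_4$, and only now does the incident-red-edge guarantee for the blue edge $xz$ pay off: at the time $xz$ was colored, neither $xy$ nor $xw$ had been drawn and every other edge of that $K_4$ meeting $xz$ is blue, so the guaranteed red edge must leave $\{x,y,z,w\}$, yielding $K_4$ plus a pendant edge and hence the forbidden subgraphs. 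Without this analysis of \emph{why} $xw$ and $yw$ are red, the case split you describe cannot always produce a copy of $X_3$ or $X_4$, so the proof as planned does not go through.
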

\begin{proof}
We will prove both statements at the same time. 
Painter will use Strategy~\ref{stgy-OP}. 
We claim that Painter can always color the new edge $e=xy$ with Strategy~\ref{stgy-OP}. 
Let $G$ be the new graph when Builder draws $e$. 
We will use induction on the number of edges. 
The base case is trivial. 

By the induction hypothesis, we may assume that every blue edge is incident with at least one red edge and that there is no monochromatic $C_3$ in $G- e$.
The strategy fails when coloring $e$ blue and red results in a blue $C_3$ and a red $C_3$, respectively. Let $z_1, z_2$ be vertices such that $\{x,z_1,y\}$ and $\{x,z_2,y\}$ are vertices of the blue $C_3$ and the red $C_3$, respectively. 
We will prove that if the strategy fails, then $G$ has both $X_3$ and $X_4$ as subgraphs, which is a contradiction, and thus the strategy does not fail in either game. 

Without loss of generality, we may assume that Builder has drawn $xz_2$ later than $z_2y$. 
Consider the graph right after Builder drew $xz_2$. Note that $xz_2$ is incident to a red edge $z_2y$. 
Since Painter uses Strategy~\ref{stgy-OP} and Painter colored $xz_2$ red, there must be a blue $C_3$ when Painter colors $xz_2$ blue. 
Let $x, z_2, v$  be the vertices of the blue $C_3$. 
Note that $xv$ and $z_2v$ are drawn earlier than $xz_2$. 
If $v\neq z_1$, then $G$ contains both $X_3$ and $X_4$ as a subgraph, and thus $v$ must be the same as $z_1$. 

Now consider the graph right before Builder drew $xz_2$.
Since Builder has already drawn $xz_1$ and Painter colored it blue, $xz_1$ must have at least one incident red edge in $G$. 
This red edge is incident with either $x$ or $z_1$, but in both cases $G$ contains both $X_3$ and $X_4$ as a subgraph, which is a contradiction, and thus the strategy works. 

Therefore, Strategy~\ref{stgy-OP} works, and thus Painter wins the online Ramsey game for $C_3$ on both $X_3$-free graphs and $X_4$-free graphs.
%Painter color red initially. Color blue if there is incident red edge. Color red if blue $C_3$ is made by coloring blue.  
\end{proof}

%\begin{proposition}\label{prop-P-free}
%Builder cannot force a monochromatic copy of $C_3$ on $X_4$-free graphs. 
%\end{proposition}
%\begin{proof}
%Painter will use strategy~\ref{stgy-OP}.
%The strategy fails when coloring the new edge $e=xy$ with bluef red results in a blue $C_3$ and a red $C_3$. Let $z_1, z_2$ be vertices such that $xz_1y, xz_2y$ are blue and red paths, respectively. Without loss of generality, We may assume that $xz_2$  is drawn later than $z_2y$. Since Painter uses strategy~\ref{stgy-OP}, the reason that $xz_2$ is colored red is that blue $C_3$ is made by coloring $xz_2$ blue. Let $v$  be that vertex with blue $xv, vz_2$. If $v\neq z_1$ then the graph contains $X_4$ as a subgraph when Builder draws $e$. Therefore $v$ must be same with $z_1$. Then consider right before when $xz_2$ is drawn, since $xz_1$ is already drawn and colored blue, it must have a red edge incident to it. In either case that the red edge is incident to $x$ or $z_1$, the graph contains $X_4$ as a subgraph when Builder draws $e$, which means that the strategy works.
%%Painter color red initially. Color blue if there is incident red edge. Color red if blue $C_3$ is made by coloring blue.
%\end{proof}
%
%\begin{strategy}
%\end{strategy}
%
%\begin{proposition}
%Painter wins the online Ramsey game $(C_3,X_5$-free graphs$)$.
%%Builder cannot force a monochromatic copy of $C_3$ on $X_5$-free graphs. 
%\end{proposition}
%\begin{proof}
%Not proven yet.
%\end{proof}

\subsection{The final touch}\label{finaltouch}

In this subsection we prove Theorem~\ref{thm-c3}.
We need two additional lemmas to prove Theorem~\ref{thm-c3}.

\begin{figure}[h]
	\begin{center}
		\includegraphics[scale=0.6]{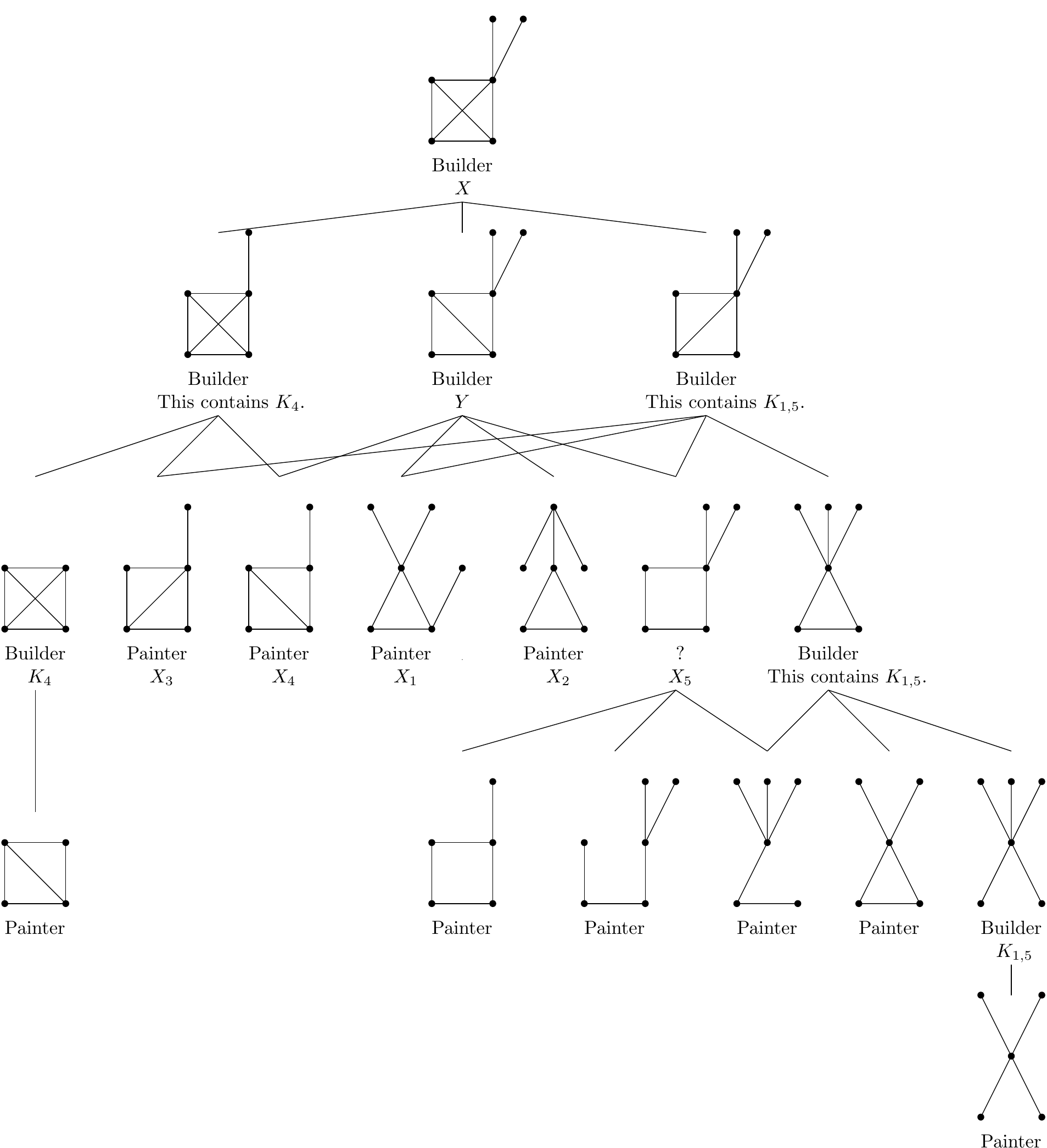}
	\end{center}
  \caption{The lines between graphs imply that the lower graph is a subgraph of the higher graph.}
  \label{fig-c3_1}
\end{figure}

\begin{lemma}\label{subgraphlemma_2}
If Builder wins the online Ramsey game for $H$ on $I$-free graphs for a graph $I$, then Builder wins the online Ramsey game for $H$ on $J$-free graphs for every graph $J$ that has $I$ as a subgraph. 
\end{lemma}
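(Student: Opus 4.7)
The plan is to observe that the subgraph relation on forbidden graphs reverses inclusion on the corresponding graph classes: if $J$ contains $I$ as a subgraph, then any graph that contains $J$ as a subgraph also contains $I$ as a subgraph, so by contrapositive every $I$-free graph is $J$-free. Thus the class of $I$-free graphs is a subclass of the class of $J$-free graphs. This is the only structural fact needed.

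Given this, the argument is essentially a strategy-transfer. Let $\sigma$ be a winning strategy for Builder in the online Ramsey game for $H$ on $I$-free graphs. I would have Builder follow $\sigma$ verbatim in the game played on $J$-free graphs. I need to verify two things: (1) every move prescribed by $\sigma$ is legal in the $J$-free game, and (2) the strategy still forces a monochromatic $H$ no matter how Painter responds. For (1), since $\sigma$ keeps the host graph $I$-free at every step, the class inclusion guarantees the host graph is also $J$-free, so each move is legal. For (2), the strategy $\sigma$ is winning against every Painter in the $I$-free game; a Painter playing in the $J$-free game produces responses that are a priori admissible Painter responses in the $I$-free game as well (Painter has no legality constraint, only a color choice), so by assumption $\sigma$ forces a monochromatic $H$.

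There is no real obstacle here; the statement is a formal monotonicity observation, and the only care required is to keep straight the direction in which the forbidden-subgraph containment translates to class containment (namely, $I \subseteq J$ as graphs yields $\{I\text{-free}\} \subseteq \{J\text{-free}\}$ as classes). Accordingly I would present the proof in a few lines.
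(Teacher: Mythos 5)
Your proposal is correct and matches the paper's proof, which is exactly the one-line observation that the class of $I$-free graphs is contained in the class of $J$-free graphs, so Builder reuses the same strategy. Your write-up just spells out the legality and winning conditions in more detail than the paper does.
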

\begin{proof}
Since the set of $I$-free graphs is a subset of the set of $J$-free graphs, Builder can use the same strategy used in the case of $J$-free graphs.
\end{proof}
\begin{lemma}\label{subgraphlemma_1}
If Painter wins the online Ramsey game for $H$ on $I$-free graphs for a graph $I$, then Painter wins the online Ramsey game for $H$ on $J$-free graphs for every graph $J$ that is a subgraph of $I$. 
%If Builder cannot force a monochromatic copy of $C_3$ on $H$-free graphs for a graph $H$, then Builder cannot force a monochromatic copy of $C_3$ also on $I$-free graphs for every subgraph $I$ of $H$.
\end{lemma}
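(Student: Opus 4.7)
The plan is to reduce this lemma to a straightforward class inclusion argument, completely parallel to the proof of Lemma~\ref{subgraphlemma_2} but with the roles swapped. The crux is to observe that when $J$ is a subgraph of $I$, the family of $J$-free graphs sits \emph{inside} the family of $I$-free graphs (the opposite containment from Lemma~\ref{subgraphlemma_2}), which lets Painter import her $I$-free winning strategy.

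First I would verify this containment. If a graph $G$ contains $I$ as a subgraph, then since $J$ is a subgraph of $I$, it follows that $G$ contains $J$ as a subgraph as well. Taking contrapositives, every $J$-free graph is $I$-free. Thus the class of $J$-free graphs is a subclass of the class of $I$-free graphs.

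Next I would transfer the strategy. Consider the online Ramsey game for $H$ on $J$-free graphs. By the rules, at every stage Builder's current graph is $J$-free, hence by the preceding paragraph also $I$-free. Therefore every sequence of moves Builder is allowed to play in the $J$-free game is a legal sequence of moves in the $I$-free game as well. Painter may accordingly respond using the winning strategy that is guaranteed to exist for the $I$-free game. That strategy prevents a monochromatic copy of $H$ against arbitrary Builder play in the $I$-free game, and in particular against Builder's (more restricted) play in the $J$-free game.

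There is no real obstacle here; the only thing to be careful about is the direction of the containment, which is easily tracked by taking contrapositives. Concluding, Painter wins the online Ramsey game for $H$ on $J$-free graphs.
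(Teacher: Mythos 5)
Your proof is correct and follows exactly the paper's argument: the class of $J$-free graphs is contained in the class of $I$-free graphs, so Painter simply reuses the $I$-free winning strategy against Builder's more restricted play. The only difference is that you spell out the containment and the strategy transfer in more detail than the paper's one-line proof.
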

\begin{proof}
Since the set of $J$-free graphs is a subset of the set of $I$-free graphs, Painter can use the same strategy used in the case of $I$-free graphs.%by previous lemma, Builder wins the online Ramsey game $(C_3, H$-free graphs$)$, which is a contradiction to that Painter wins the online Ramsey game $(C_3,H$-free graphs$)$.
\end{proof}

Finally, we prove Theorem~\ref{thm-c3}.

\begin{proof}[Proof of Theorem~\ref{thm-c3}]

By Lemma~\ref{lemma-c3}, it is enough to consider when $F$ is a subgraph of $X$. 

By Propositions~\ref{prop-N-free},~\ref{prop-M-free}, and~\ref{prop-O-free}, along with Lemma~\ref{subgraphlemma_1}, Painter wins the online Ramsey game for $C_3$ on $F$-free graphs %Builder cannot force a monochromatic copy of $C_3$ on $H$-free graphs
if $F$ is isomorphic to a subgraph of a graph in $\{X_1,X_2,X_3,X_4\}$. 
By Propositions~\ref{prop-K_4-free}, \ref{prop-K_15-free}, and \ref{prop-A-free}, along with Lemma~\ref{subgraphlemma_2}, Builder wins the online Ramsey game for $C_3$ on $F$-free graphs %Builder cannot force a monochromatic copy of $C_3$ on $H$-free graphs
if $F$ contains a graph in $\{X_1,X_2,X_3,X_4\}$ as a proper subgraph.

It is easy to check that all graphs without isolated vertices are covered by the above paragraph except for the graph $X_5$. 
%For the only if part of the Theorem~\ref{thm-c3}, it is enough to show that if a graph $F$ is not isomorphic to a subgraph of $X_1,X_2,X_3$, or $X_4$, then Builder wins the online Ramsey game for $C_3$ on $F$-free graphs. 
%Builder can force a monochromatic copy of $C_3$ on $H$-free graphs.
%By Lemma~\ref{lemma-c3}, the case reduces to subgraphs of $X$.
Figure~\ref{fig-c3_1} shows subgraphs of $X$. 
Moreover, ``Builder'' and ``Painter'' written under some graph in Figure~\ref{fig-c3_1} means that Builder and Painter, respectively, wins the online Ramsey game for $C_3$ on $F$-free graphs. %We can easily see that $X_6,X_7,X_8,X_9,X_{10},X_{11}$ are isomorphic to one of subgraphs of $\{X_1,X_2,X_3,X_4\}$. 
%All the remaining cases are covered by Propositions~\ref{prop-K_4-free},~\ref{prop-K_15-free}, and~\ref{prop-A-free} with Lemma~\ref{subgraphlemma_2}.
%Note that if Builder cannot force a monochromatic copy of $C_3$ on $H$-free graphs, then Builder cannot force a monochromatic copy of $C_3$ also on $I$-free graphs for every $I$ which is subgraph of $H$. Also Note that if Builder can force a monochromatic copy of $C_3$ on $H$-free graphs, then Builder can force a monochromatic copy of $C_3$ also on $I$-free graphs for every $I$ which has $H$ as subgraph. By Lemma~\ref{lemma-c3}, for every graph $H$ not isomorphic to a subgraph of $X$, Builder can force a monochromatic copy of $C_3$ on $H$-free graphs. Then by Propositions ~\ref{prop-K_4-free},~\ref{prop-K_15-free},~\ref{prop-A-free},~\ref{prop-M-free},~\ref{prop-N-free} and ~\ref{prop-O-free}, Theorem~\ref{thm-c3} is proven supported by Figure~\ref{fig-c3_1} that these cover all the cases, since every proper subgraphs $J$ of $K_4$, $K_{1,5}$, $A$ or $X_5$ is a subgraph of at least one of $\{X_2, X_1, X_3, X_4\}$, which means that Builder cannot force $C_3$ on $J$-free graphs.
\end{proof} 

We end this section with the only case that is unsolved. 

\begin{question}
Let $X_5$ be the graph in Figure~\ref{fig-c3}. Who wins the online Ramsey game for $C_3$ on $X_5$-free graphs?
\end{question}

%\begin{theorem}
%$C_3$ is avoidable on $\{K_4-e+f\}$-free graphs where $f$ is a new edge that has a vertex of $K_4-e$ as an end.
%\end{theorem}
%\begin{proof}
%\end{proof}

%\begin{theorem}
%If $C_3$ is avoidable on $\{X\}$-free graphs, then $X\in\{K_4-e+2f, K_4-e+f\}$.
%\end{theorem}
%\begin{proof}
%TODO.
%\end{proof}

%\bibliographystyle{plain}
%\bibliography{online_ramsey}

\section*{Acknowledgments}

We thank the anonymous referee for helping us improve the readability of the paper.

\end{document}